\documentclass[ 12pt,reqno]{amsart}

\textwidth 15.0cm \textheight 18.0cm

\usepackage{graphicx}
\usepackage{amsmath, amsfonts, amssymb, amsthm, xspace}
\usepackage{xcolor}
\usepackage[margin=0.75in]{geometry}

\usepackage{booktabs} 
\usepackage{enumitem} 
\makeatletter
\def\namedlabel#1#2{\begingroup
    #2%
    \def\@currentlabel{#2}%
    \phantomsection\label{#1}\endgroup
}
\makeatother

\usepackage[utf8]{inputenc}

\renewcommand{\pmod}[1]{\left( \mathrm{ mod\;}#1\right)}

\usepackage{mathtools}

\usepackage{mathrsfs}

\newcommand{\scrD}{\mathscr D}

\usepackage{oldgerm}


\newcommand{\SF}{\mathfrak{SF}}

\renewcommand{\SF}{\mathfrak{S}}
\renewcommand{\SF}{\mathscr S}

\newcommand{\cD}{\mathcal{D}}
\newcommand{\cF}{\mathcal{F}}

\newcommand{\cR}{\mathcal R}
\newcommand{\cT}{\mathcal T}

\newcommand{\cV}{\mathcal V}
\newcommand{\cW}{\mathcal W}

\newcommand{\cE}{\mathcal E}

\newcommand{\ZZ}{\mathbb{Z}}

\newcommand{\NN}{\mathbb{N}}
\newcommand{\QQ}{\mathbb{Q}}

\newcommand{\SL}{\operatorname{SL}(2,\NN)}


\newcommand{\va}{\bar{a}}
\newcommand{\vq}{\bar{q}}
\newcommand{\vva}{\bar{\bar{a}}}
\newcommand{\vvq}{\bar{\bar{q}}}

\newcommand{\vvvq}{\bar{\bar{\bar{q}}}}

\newcommand{\mystar}{\ast} 

\usepackage{bm}

\newtheorem{theorem}{Theorem}
\newtheorem{lemma}[theorem]{Lemma}

\newtheorem{proposition}[theorem]{Proposition}


\theoremstyle{remark}

\newtheorem{remark}{Remark}

\usepackage{subfigure}  
\usepackage{float}
\usepackage[]{caption}
\captionsetup{labelfont = sc, textfont = sl,font=footnotesize, labelsep=period, 
	margin={7mm,1mm},width=.789\textwidth,
	format=plain,oneside}




\definecolor{orange}{rgb}{1,0.5,0}
\definecolor{Red}{rgb}{.795,0.015,0.017}
\definecolor{Ggreen}{rgb}{0.,0.675,0.0128}
\definecolor{Bblue}{rgb}{0.16,.32,0.91}

\usepackage[hyphens]{url}
\usepackage{multicol}

 \usepackage[backref=page]{hyperref}
\hypersetup{
    colorlinks = true,
linkcolor={Red},
urlcolor={blue},
citecolor={Ggreen},    
urlcolor = {blue},
citebordercolor = {0.33 .58 0.33},
 linkbordercolor = {0.99 .28 0.23},
 breaklinks=true
}

\renewcommand*{\backref}[1]{}
\renewcommand*{\backrefalt}[4]{%
  \ifcase #1 %
No citations.
  \or
(page #2).%
  \else
(pages #2).%
  \fi%
}
\usepackage{cite}


\makeatletter
\@namedef{subjclassname@2020}{\textup{2020} Mathematics Subject Classification}
\makeatother
\begin{document}

\title[On denominators of consecutive $\SL$-saturated Farey fractions]{On denominators of consecutive $\SL$-saturated
Farey fractions}
\author{Jack Anderson, Florin P. Boca, Cristian Cobeli, Alexandru Zaharescu}

\date{\today}


\address[Jack Anderson]{Department of Mathematics, University of Illinois at Urbana-Champaign, Urbana, IL 61801, USA.}
\email{jacka4@illinois.edu}

\address[Florin P. Boca]{Department of Mathematics, University of Illinois at Urbana-Champaign, Urbana, IL 61801, USA.}
\email{fboca@illinois.edu}

\address[Cristian Cobeli]{"Simion Stoilow" Institute of Mathematics of the Romanian Academy,~21 Calea Grivitei Street, P. O. Box 1-764, Bucharest 014700, Romania}
\email{cristian.cobeli@imar.ro}

\address[Alexandru Zaharescu]{Department of Mathematics,University of Illinois at Urbana-Champaign, Urbana, IL 61801, USA,
%
and 
"Simion Stoilow" Institute of Mathematics of the Romanian Academy,~21 
Calea Grivitei 
Street, P. O. Box 1-764, Bucharest 014700, Romania}
\email{zaharesc@illinois.edu}

\subjclass[2020]{Primary 11B57.
Secondary: 11J71,   11K36,  11L05.}


\thanks{Key words and phrases: $\operatorname{SL}(2,{\mathbb N})$-saturated
Farey fractions; mediant insertion; consecutive denominators}

\begin{abstract}
The sequence $({\mathscr S}_Q)_Q$ of 
$\SL$-saturated Farey fractions was defined in our previous work by
${\mathscr S}_Q := \{ a/q \in {\mathbb Q} \cap (0,1]: q+a+\bar{a} \le Q\}$, where $\bar{a}$ is the multiplicative
inverse of $a\pmod{q}$ in $[1,q)$.
Here, we prove that the set of $Q$-scaled denominators of consecutive fractions in ${\mathscr S}_Q$ is dense in the region
${\mathcal V}:=\{ (x,y)\in [0,1]^2 : \max \{ (1-3x)/2,2x-1\} \le y \le \max \{ x,1-x\} \}$, and
provide a formula for their distribution in ${\mathcal V}$ as $Q\rightarrow \infty$.
\end{abstract}
\maketitle

\section{Introduction}\label{Sect1}
Motivated by the study of the distribution of reduced quadratic irrationals~\mbox{\cite{Boca2007, KOPS2001, Pol1986, Tec2024, Ust2013}},
we considered in \cite{ABCZ2025} the sequence $(\SF_Q)_Q$ of \emph{$\SL$-saturated Farey fractions},
defined by
\begin{equation*}
\SF_Q := \left\{ \frac{d}{b} : 
\begin{matrix} \text{ there exist } a,c \in \NN 
 \text{ such that }
\left( \begin{smallmatrix} a & b \\ c & d \end{smallmatrix} \right) \in \operatorname{SL}(2,\ZZ)   \\
a\ge b\ge d \ge 1 ,\ a\ge c \ge d,\ a+d \le Q \end{matrix} \right\} ,
\quad  Q\ge 3,
\end{equation*}
viewed as a subset of the customary  Farey set
\begin{equation*}
\cF_Q :=\left\{ \frac{d}{b} : d,b\in \NN, \  1\le d \le  b \le  Q,\   \gcd (d,b)=1 \right\} .
\end{equation*}

As observed in \cite{ABCZ2025}, this set can also be conveniently described as
\begin{equation*}
\SF_Q =\bigg\{ \frac{a}{q} \in \QQ \cap (0,1] : h \bigg( \frac{a}{q}\bigg) :=q+a+\va \le Q\bigg\} ,\quad Q \ge 3,
\end{equation*}
where $\va$ denotes the multiplicative inverse of $a\pmod{q}$ in $[1,q)$.  
In particular, this shows that $\cF_Q \subseteq \SF_{3Q} \subseteq \cF_{3Q}$, and so
\begin{equation}\label{cup}
\bigcup_{Q\ge 3} \SF_Q = \bigcup_{Q\ge 1} \cF_Q = {\mathbb Q} \cap (0,1] .
\end{equation}
Note also that $h(a/q)\neq h(1-a/q)$, but some sort of 
$(x\mapsto 1-x)$-symmetry of $h$ is present in the identity
\begin{equation*}
h\bigg( \frac{a}{q} \bigg) + h \bigg( 1-\frac{a}{q}\bigg) =4q .
\end{equation*}

To compare the sets $\SF_Q$ and $\cF_Q$, look at
\begin{align*}
\cF_7 & = \bigg\{ \frac{1}{7}, \frac{1}{6}, \frac{1}{5}, \frac{1}{4}, \frac{2}{7}, \frac{1}{3},
\frac{2}{5},\frac{3}{7}, \frac{1}{2}, \frac{4}{7}, \frac{3}{5},\frac{2}{3}, 
\frac{5}{7}, \frac{3}{4}, \frac{4}{5}, \frac{5}{6}, \frac{6}{7}, \frac{1}{1}\bigg\} ,  \\
\SF_7 &  = \bigg\{ \frac{1}{5}, \frac{1}{4},\frac{1}{3}, \frac{1}{2}, \frac{2}{3}, \frac{1}{1}\bigg\} , \\
\SF_{10} & =\bigg\{ \frac{1}{8},
\frac{1}{7}, \frac{1}{6}, \frac{1}{5},\frac{1}{4}, \frac{1}{3}, \frac{2}{5},  \frac{1}{2},
\frac{3}{5}, \frac{2}{3}, \frac{3}{4}, \frac{1}{1}\bigg\} \quad \text{and}  \\
\cF_{10} \setminus \SF_{10} & = \bigg\{
 \frac{1}{10}, \frac{1}{9},\frac{2}{9} ,
\frac{2}{7}, \frac{3}{10}, \frac{3}{8}, 
\frac{3}{7}, \frac{4}{9}, 
\frac{5}{9}, \frac{4}{7}, \frac{5}{8} , 
\frac{7}{10}, \frac{5}{7},
\frac{7}{9}, \frac{4}{5}, \frac{5}{6}, \frac{6}{7}, 
\frac{7}{8}, \frac{8}{9},
\frac{9}{10}\bigg\} .
\end{align*}

Besides property \eqref{cup}, the sets $\SF_Q$ exhibit another interesting arithmetic property resembling $\cF_Q$.  As shown in the appendix to \cite{ABCZ2025},
the elements of each set $\SF_Q^* :=\{ 0\} \cup \SF_Q$ define a {\it unimodular partition} of the interval $[0,1]$.
This means that if $0/1 < a_1/q_1= 1/(Q-2) < \cdots < a_{\# \SF_Q}/b_{\# \SF_Q}=1/1$ denote the elements 
of $\SF^*_Q$, then $a_{i+1} q_i -a_i  q_{i+1} =1$ for any $i=1,\ldots,\# \SF_Q -1$.

However, in sharp contrast with $(\cF_Q)_Q$, the elements of $(\SF_Q)_Q$ are no longer uniformly
distributed as $Q\rightarrow \infty$. Furthermore, it was proved in \cite[Theorem~1]{ABCZ2025} that
\begin{equation}\label{DQasym}
\# \big( \SF_Q \cap [0,\beta]\big) = \frac{Q^2}{2\zeta(2)} \log \bigg( \frac{2+2\beta}{2+\beta}\bigg)
+O_\varepsilon (Q^{3/2+\varepsilon}) ,\quad \forall\beta \in [0,1].
\end{equation}
In particular,  we have
\begin{equation}\label{assymetry}
\lim\limits_Q \frac{\# (\SF_Q \cap [0,1/2])}{\# \SF _Q} =\frac{\log (6/5)}{\log (4/3)} \approx 0.63376  .
\end{equation}

\begin{figure}[thb]
\centering
\hfill
\includegraphics[width=0.58\textwidth]{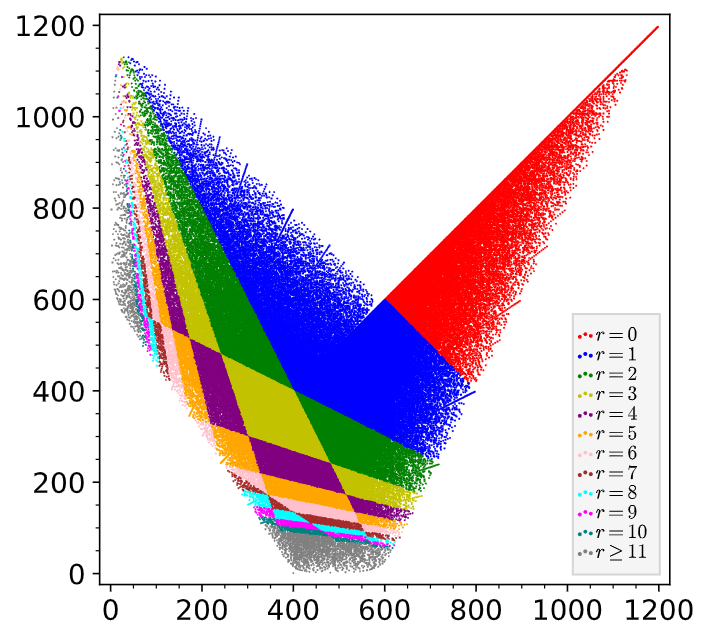} 
\hfill\mbox{}
\caption{The set of pairs of denominators of consecutive fractions
in $\SF_{1200}$. 
The points are colored differently, depending on the number of intermediate Farey fractions whose insertion is delayed at this level.
}
\label{FigureDenominatorsApart1200}
\end{figure}

It is well-known that each set  \mbox{$\cF^*_Q :=\{ 0\} \cup \cF_Q$} defines a unimodular partition of the interval $[0,1]$.
In particular, the elements of~$\cF^*_Q$ are determined by their denominators only.
The elements of the~set
\begin{equation*} 
\cD_Q :=\big\{ (q_1,q_2)\in \NN^2: \exists\, a_1,a_2 \in \ZZ,\ a_1/q_1 <a_2/q_2 
\text{ consecutive elements in } \cF^*_Q \big\} 
\end{equation*}
are in one-to-one correspondence with the~set of primitive lattice points in the triangle $Q\cT$,
where~$\cT$ denotes the Farey triangle $\{ (x,y)\in [0,1]^2: x+y >1\}$. As a result, the scaled set~$\cD:=\cup_Q  Q^{-1} \cD_Q$ is dense in the closed triangle $\overline{\cT}$.
It was first noticed by Kargaev and Zhigljavsky~\cite{KZ1997} (see also \cite[Lemma~2]{BCZ2001} for
a quick effective proof) that the $Q$-scaled sets $Q^{-1} \cD_Q$
are uniformly distributed in $\overline{\cT}$, in the sense that, for every 
rectangular box $\cR\subseteq \overline{\cT}$,
\begin{equation}\label{eq2}
\lim\limits_Q \frac{\# (\cD_Q \cap Q\cR)}{\# \cD_Q}=2\operatorname{Area} (\cR).
\end{equation}
As noticed in \cite{BCZ2001}, the shifting of denominators of consecutive elements in $\cF_Q^*$  is conveniently described by 
iterating the map $T:\cT \rightarrow \cT$,
\begin{equation*}
T(x,y) :=\big( y, \kappa (x,y) y-x\big) \quad \text{with }
\kappa (x,y):=\bigg\lfloor \frac{1+x}{y}\bigg\rfloor ,
\end{equation*}
which is invertible, area-preserving, and satisfies, for every $Q\ge 1$,
\begin{equation*}
T\bigg( \frac{q_1}{Q},\frac{q_2}{Q}\bigg) =\bigg( \frac{q_2}{Q},\frac{q_3}{Q}\bigg),
\end{equation*}
if $a_1/q_1 < a_2/q_2 < a_3/q_3$ are consecutive elements in $\cF^*_Q$.

The primary aim of this paper  is to investigate the existence of an analogue of~\eqref{eq2}
when~$\cF_Q$ is being replaced by $\SF_Q$. For every $Q\ge 3$, we consider the set
$\mathscr{D}_Q$ of denominators $(q_1,q_2)$ of consecutive elements
$a_1/q_1 <a_2/q_2$ from $\SF_Q$. The role of $\overline{\cT}$ will be played here by the region
\begin{equation*} 
\cV:=\bigg\{ (x,y)\in [0,1]^2: \max \bigg\{ \frac{1-3x}{2} , 2x-1 \bigg\} \le y \le \max \{ 1-x,x\} \bigg\}.
\end{equation*}

We  prove the following result:

\begin{theorem}\label{Thm1}
The set $\mathscr{D}:=\cup_Q Q^{-1} \mathscr{D}_Q$ is dense in $\cV$.
\end{theorem}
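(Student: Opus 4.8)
The plan is to exhibit, for any target point $(x_0,y_0)$ in the interior of $\cV$, a sequence of pairs $(q_1,q_2)\in\mathscr D_Q$ with $(q_1/Q,q_2/Q)\to (x_0,y_0)$. The natural first step is to translate the defining condition $q+a+\va\le Q$ for consecutive fractions into a concrete parametrization. If $a_1/q_1<a_2/q_2$ are consecutive in $\SF_Q^*$, then by the modular-partition property $a_2q_1-a_1q_2=1$, so once $(q_1,q_2)$ and one numerator are chosen the other is determined. The admissibility constraints are then $q_1+a_1+\overline{a_1}\le Q$ and $q_2+a_2+\overline{a_2}\le Q$, \emph{together with} the crucial requirement that no fraction of $\SF_Q$ lies strictly between them; the latter is what forces the region $\cV$ rather than the full Farey triangle, and identifying which fractions are ``delayed'' (the coloring in Figure~\ref{FigureDenominatorsApart1200}) is the combinatorial heart of the matter.

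Concretely, I would first understand the boundary curves of $\cV$. The upper envelope $y=\max\{x,1-x\}$ should come from the two extreme ways consecutive denominators can sit: $q_2\approx q_1$ (near the Stern--Brocot top of a local mediant chain) giving $y=x$, and the Farey-type relation degenerating to $y=1-x$. The lower envelope $y=\max\{(1-3x)/2,\,2x-1\}$ should encode, respectively, the constraint coming from $h(a_2/q_2)\le Q$ when $a_2/q_2$ is small (so $\overline{a_2}$ is large, using the identity $h(a/q)+h(1-a/q)=4q$), and the constraint $2x-1\le y$ coming from the mediant $a_1+a_2$ over $q_1+q_2$ having height $\le Q$, i.e. the first omitted Farey neighbour. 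So the second step is: for a point near each of these boundary pieces, write down an explicit family of matrices $\left(\begin{smallmatrix} a&b\\ c&d\end{smallmatrix}\right)\in\mathrm{SL}(2,\ZZ)$ realizing it, and check the inequalities hold with room to spare in the interior.

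The main construction I would pursue is this: fix rationals $x_0=u/w$, $y_0=v/w$ with $(u,v,w)$ in the interior of $\cV$, and look for consecutive pairs of the form $(q_1,q_2)=(ku+r_1,\,kv+r_2)$ with $k\to\infty$ and bounded correction terms $r_1,r_2$, choosing numerators $a_1,a_2$ linearly in $k$ as well so that $a_2q_1-a_1q_2=1$ is maintained and so that $a_i$ is, say, close to a prescribed fraction of $q_i$ — this controls $\va_i$ via continued fractions and hence controls $h(a_i/q_i)=q_i+a_i+\va_i$. One then verifies: (i) both heights are $\le Q:=$ (the appropriate linear function of $k$); (ii) the consecutivity in $\SF_Q$ — equivalently, every Farey fraction strictly between $a_1/q_1$ and $a_2/q_2$ with denominator $\le Q$ has height $>Q$. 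Step (ii) is where I expect the real obstacle: one must rule out \emph{all} intervening mediant-insertions, and the set of candidate intervening fractions is itself an infinite Stern--Brocot subtree, so the bound $q+a+\va>Q$ must be checked uniformly down that subtree. I would handle this by showing that the mediant $a_1/q_1\oplus a_2/q_2 = (a_1+a_2)/(q_1+q_2)$ already has height $>Q$ (this is exactly the condition $y>2x-1$, up to the correction terms and the symmetric condition governing the $(1-3x)/2$ branch), and then arguing that every deeper fraction in the subtree has height at least that of one of the two ``shallowest'' mediants, so a single inequality at the top propagates downward.

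If that monotonicity-down-the-subtree claim is delicate, the fallback is a counting/pigeonhole argument in the spirit of \cite{BCZ2001, KZ1997}: show that $\#(\mathscr D_Q\cap Q\cR)\to\infty$ for every box $\cR$ in the interior of $\cV$ by estimating the number of admissible $(q_1,q_2)$ directly, which would give density as an immediate corollary and also set up the distribution formula promised in the abstract. In either approach, the routine parts are the continued-fraction estimates for $\va$ and the linear algebra forcing $a_2q_1-a_1q_2=1$; the substantive part is pinning down exactly why the lower boundary of $\cV$ is $\max\{(1-3x)/2,2x-1\}$ and showing both pieces are sharp by matching constructions.
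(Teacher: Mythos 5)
Your central reduction is the right one and is in fact how the paper proceeds: consecutivity of $a_1/q_1<a_2/q_2$ in $\SF_Q$ is equivalent to $\max\{h(a_1/q_1),h(a_2/q_2)\}\le Q<h\big((a_1+a_2)/(q_1+q_2)\big)$, because the height strictly increases when passing to a mediant, and this single inequality propagates by induction down the Stern--Brocot subtree of all fractions strictly between the two (given $a_2q_1-a_1q_2=1$, every such fraction arises from iterated mediants). This is exactly Lemma~\ref{L10}, quoted from the appendix of \cite{ABCZ2025}, and the equivalence \eqref{SQcharacterization}; the ``delicate monotonicity claim'' you worried about is already available there. Where you diverge is in how admissible pairs are produced. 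The paper does not build explicit linear families; it carries out precisely your fallback: using Lemmas~\ref{L11} and \ref{L12} it rewrites the three height conditions as explicit inequalities on a triple $(q_1,q_2,a)$ with $q_1a\equiv 1\pmod{q_2}$ (resp.\ $(q_1,q_2,b)$ with $q_2b\equiv-1\pmod{q_1}$), and counts such triples in $Q\cR$ by standard incomplete Kloosterman-sum estimates, obtaining $\frac{Q^2}{\zeta(2)}\iint_{\cR}H_i\,dx\,dy+O_\varepsilon(Q^{3/2+\varepsilon})$ with $H_i>0$ on $\mathring{\cV}_i$. That count is what the fallback buys: it gives density (Proposition~\ref{P9}) and the limiting distribution of Theorem~\ref{Thm2} in one stroke.

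One step of your primary route would fail as written: choosing $a_i$ ``close to a prescribed fraction of $q_i$'' does not control $\va_i$, and continued fractions do not help---modular inverses are not continuous in the ratio $a_i/q_i$. The fact that rescues the construction is that, once $a_2q_1-a_1q_2=1$ is imposed, everything is pinned down by $(q_1,q_2)$ alone: $a_1=q_1-\vq_2$, $a_2=\vvq_1$, $\va_1=\big(1+\lfloor q_2/q_1\rfloor\big)q_1-q_2$, $\vva_2=q_1-\lfloor q_1/q_2\rfloor q_2$ (Lemma~\ref{L11}), so along a family with $(q_1,q_2)$ linear in a parameter and $q_2/q_1$ bounded away from integers, all three heights are eventually (piecewise) linear and the strict interior inequalities defining $\cV_i$ hold for the leading coefficients; with that substitution your explicit-family argument for rational interior targets can be completed, but it is exactly this lemma, not a continued-fraction estimate, that must do the work. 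Your heuristic attributions of the boundary pieces are also not quite right (for instance, $y\ge 2x-1$ on $\cV_1$ comes from $h(a_1/q_1)\le Q$ combined with $\vq_2<q_1$, and the side $3x+2y=1$ comes from Lemma~\ref{L7}, not directly from a mediant-height condition), but that concerns the converse inclusion $\mathscr{D}\subseteq\cV$ of Section~\ref{Sect2} rather than the density assertion itself.
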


We will first show the inclusion $\scrD \subseteq \cV$, then 
prove Theorem \ref{Thm2}, a stronger result which provides an asymptotic 
formula for the cardinality of  the set $\scrD_Q \cap Q\cR$ for rectangular boxes
$\cR$ inside $\cV$. This will show  in particular that $Q^{-1} \scrD_Q \cap \cR \neq \emptyset$ 
for large enough $Q$ (depending on $\cR$). 

\begin{theorem}\label{Thm2}
The $Q$-scaled sets $Q^{-1} \mathscr{D}_Q$ have an asymptotic limiting distribution, described
as follows. Partition the set $\cV$ as $\cV_1 \cup \cV_2 \cup \cV_3$, with
\begin{equation}\label{eq3}
\begin{split}
& \cV_1 :=\bigg\{ (x,y) \in \bigg[ \frac{1}{2},1\bigg] \times 
\bigg[\frac{1}{3},1\bigg]: \max \{1-x,2x-1\} \le y \le x\bigg\} , \\
& \cV_2 := \bigg\{ (x,y) \in \bigg[ \frac{1}{5},\frac{2}{3} \bigg] \times \bigg[ 0,\frac{1}{2} \bigg] :
\max \bigg\{ \frac{1-3x}{2} ,0,2x-1\bigg\} \le y \le \min\{ x,1-x\} \bigg\} , \\
& \cV_3 := \bigg\{ (x,y)\in \bigg[0,\frac{1}{2} \bigg] \times \bigg[\frac{1}{5},1\bigg] :  
 \max \bigg\{ \frac{1-3x}{2}, x\bigg\} \le y \le 1-x\bigg\} .
\end{split}
\end{equation}
Then, for every rectangular box $\cR \subseteq \cV _i$, $i=1,2,3$,
\begin{equation*}
\lim\limits_Q \frac{\#( \mathscr{D}_Q \cap Q\cR)}{\# \mathscr{D}_Q} =\frac{2}{\log (4/3)} \iint_{\cR} H_i (x,y)\, dx dy,
\end{equation*}
with $H_i (x,y) > 0$ on $\mathring{\cV}_i$ given by \eqref{H1}, \eqref{H2} and \eqref{H3}.
\end{theorem}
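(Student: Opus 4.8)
\emph{Proof plan.} The plan is to count the consecutive pairs of $\SF_Q^*$ through the $\operatorname{SL}(2,\ZZ)$ data attached to each, reduce the count to the equidistribution of pairs $(a,\va)$ of mutually inverse residues, and read off the densities $H_i$ from an elementary one‑variable length computation. First I record the combinatorial description that follows at once from the fact (appendix of \cite{ABCZ2025}) that $\SF_Q^*$ is a modular partition: two consecutive elements $\tfrac{a_1}{q_1}<\tfrac{a_2}{q_2}$ of $\SF_Q^*$ are necessarily Farey neighbours, $q_1a_2-a_1q_2=1$, with $0\le a_1<q_1$, $1\le a_2\le q_2$, $\va_1\equiv-q_2\pmod{q_1}$ and $\va_2\equiv q_1\pmod{q_2}$; and conversely a pair of Farey neighbours is consecutive in $\SF_Q^*$ exactly when
\[
q_1+a_1+\va_1\le Q,\qquad q_2+a_2+\va_2\le Q,\qquad 2q_1+q_2+a_1+a_2>Q .
\]
The last inequality expresses $h\!\big(\tfrac{a_1+a_2}{q_1+q_2}\big)>Q$; it is sufficient because a short computation shows $h\!\big(\tfrac{ma_1+na_2}{mq_1+nq_2}\big)\ge h\!\big(\tfrac{a_1+a_2}{q_1+q_2}\big)$ for all coprime $m,n\ge1$, so the mediant minimizes $h$ among the fractions strictly between $\tfrac{a_1}{q_1}$ and $\tfrac{a_2}{q_2}$, and no element of $\SF_Q$ can lie between them. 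The $O(1)$ endpoint pairs (involving $0/1$ or $1/1$) are discarded.

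Such a pair is determined by $(q_1,q_2)$ alone, and then $a_1\equiv-\overline{q_2}\pmod{q_1}$, $a_2=(1+a_1q_2)/q_1$, $\va_1=q_1-(q_2\bmod q_1)$, $\va_2=q_1\bmod q_2$ are all determined. In the scaled variables $x=q_1/Q$, $y=q_2/Q$ the only quantity requiring arithmetic (Kloosterman‑sum) input is $\beta:=a_1/q_1$; since $|a_1/q_1-a_2/q_2|=1/(q_1q_2)$ one also has $a_2/q_2\to\beta$, while $\va_2/q_2=\{x/y\}$ and $\va_1/q_1=1-\{y/x\}$ are explicit in $(x,y)$. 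Dividing the three inequalities by $Q$: the inequality for the larger‑denominator fraction and the mediant inequality are linear in $(x,y,\beta)$, while the one for the smaller‑denominator fraction is only piecewise linear (through $\lfloor x/y\rfloor$ or $\lfloor y/x\rfloor$) — but on $\cV$ one checks it is implied by the other two. The surviving system is, according as $y\le x$ or $y\ge x$,
\[
(2+\beta)x-y\le1,\ \ (2+\beta)x+(1+\beta)y>1\ ;\qquad
x+(1+\beta)y\le1,\ \ (2+\beta)x+(1+\beta)y>1 ,
\]
cutting out for each $(x,y)$ an interval of admissible $\beta\in[0,1]$; write $L(x,y)$ for its length. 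A direct check shows $L>0$ precisely on $\mathring\cV$, with the regimes $y\le x$ / $y\ge x$ corresponding to $\cV_1\cup\cV_3$ / $\cV_2$, and $x+y=1$ splitting $\{y\le x\}$ into $\cV_1$ (where the interval is $[0,\tfrac{1+y-2x}{x}]$) and $\cV_3$.

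The arithmetic input is the equidistribution of inverse pairs: for fixed $d$ the points $\{(a/d,\va/d):1\le a<d,\ (a,d)=1\}$ are equidistributed in $[0,1)^2$, with error controlled by Weil's bound $\sum_{a\bmod d}^{*}e\big((ha+k\va)/d\big)\ll_\varepsilon d^{1/2+\varepsilon}(h,k,d)^{1/2}$. To exploit it, fix $q_1$ and the integer $n=\lfloor q_2/q_1\rfloor$, so $q_2=nq_1+r$ with $r=q_2\bmod q_1$; the three inequalities become constraints on the inverse pair $(r,\overline{-r}\bmod q_1)$ cutting out a region of the unit square whose piecewise‑linear shape depends only on $q_1$ and $n$, and equidistribution gives the number of admissible $q_2$ in this slice up to $O_\varepsilon(q_1^{1/2+\varepsilon})$. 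Summing over $n$ ($\ll Q/q_1$ values), over $q_1\le Q$, and handling the slices with $\lfloor q_1/q_2\rfloor$ fixed symmetrically, yields
\[
\#\big(\mathscr{D}_Q\cap Q\cR\big)=\frac{Q^2}{\zeta(2)}\iint_{\cR}L(x,y)\,dx\,dy+O_\varepsilon\big(Q^{3/2+\varepsilon}\big),
\]
the $\zeta(2)^{-1}$ coming from $\varphi(q_1)/q_1$. Taking $\cR=\cV$ gives $\#\mathscr{D}_Q=\tfrac{Q^2}{\zeta(2)}\iint_\cV L+O_\varepsilon(Q^{3/2+\varepsilon})$, so $Q^{-1}\mathscr{D}_Q$ has limiting density $H=L\big/\iint_\cV L$, positive on $\mathring\cV$; this also gives Theorem~\ref{Thm1}.

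Finally, on each $\cV_i$ the admissible $\beta$‑interval keeps a fixed combinatorial type, so $L$ is a single piecewise‑polynomial function there; in particular $H_1(x,y)=\tfrac{1+y-2x}{x}\big/\big(\iint_\cV L\big)$, which is \eqref{H1}, and evaluating the two remaining endpoints in the $\cV_2$‑ and $\cV_3$‑regimes gives \eqref{H2} and \eqref{H3}, with $H_i>0$ on $\mathring\cV_i$ since $L>0$ there. The main difficulty is the middle step: making the dictionary between $(q_1,q_2)$ and the parameters precise, checking region by region the redundancy of the smaller‑denominator inequality and that $\lfloor x/y\rfloor=1$ wherever it could bind, and — above all — keeping the inverse‑pair equidistribution error uniform once it is summed first against the auxiliary integer $n$ and then over the modulus, so that the total error is genuinely $O_\varepsilon(Q^{3/2+\varepsilon})$, negligible against the main term of order $Q^2$.
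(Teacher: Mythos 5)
Your proposal is correct and follows essentially the same route as the paper: characterize consecutive pairs of $\SF_Q$ by $\max\{h(a_1/q_1),h(a_2/q_2)\}\le Q<h\big(\tfrac{a_1+a_2}{q_1+q_2}\big)$, turn these into linear inequalities in $(q_1,q_2)$ plus one modular-inverse parameter, and count via Weil/Kloosterman equidistribution to get a main term $\frac{Q^2}{\zeta(2)}\iint_{\cR}(\cdot)\,dx\,dy$ with error $O_\varepsilon(Q^{3/2+\varepsilon})$; your interval lengths coincide with \eqref{H1}, \eqref{H2}, \eqref{H3} once one notes (as you do) that the smaller-denominator height constraint is redundant in each regime, which only removes inactive entries from the paper's min/max. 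The sole discrepancy is cosmetic: you normalize the density by $\iint_{\cV}L$, whereas the paper's $H_i$ are the unnormalized densities relative to $Q^2/\zeta(2)$, so your $H_1$ is \eqref{H1} only up to that constant.
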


\begin{figure}[thb]
\centering
\hfill
\includegraphics[width=0.408\textwidth]{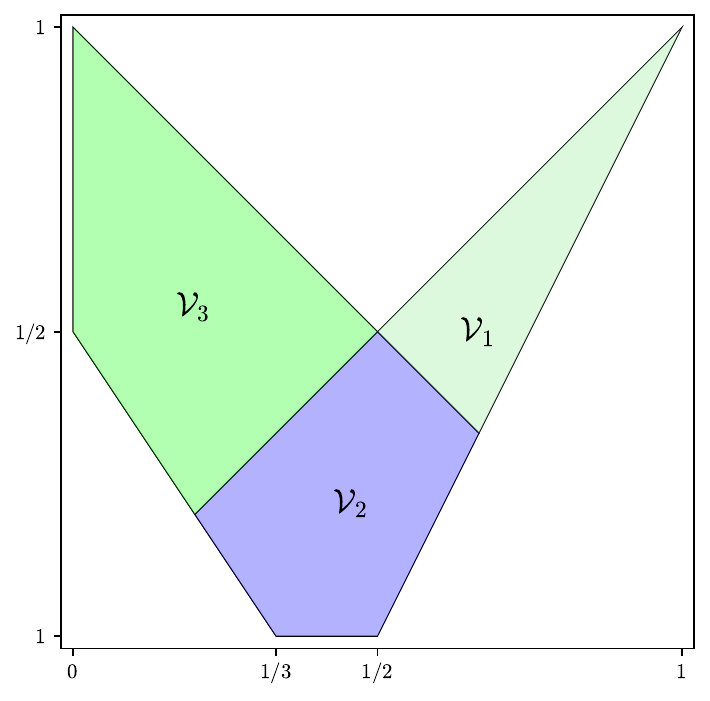} 
\includegraphics[width=0.408\textwidth]{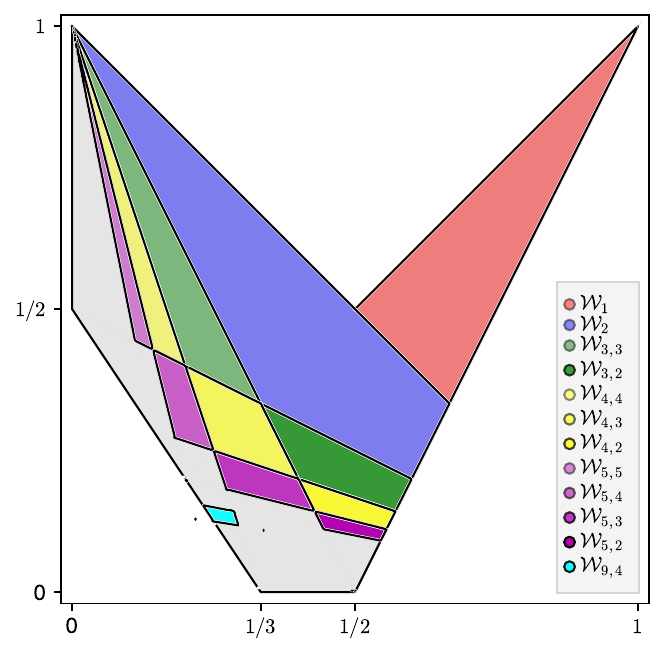} 
\hfill\mbox{}
\caption{The polygons defined in~\eqref{eq3} and the partition 
of the $\cV$-shape described in Section~\ref{partition}. 
The order in which the polygons are positioned is from right 
to left and from top to bottom.
}
\label{FigureVandWs}
\end{figure}

The region $\cV$ can be partitioned into subregions $\cW_1,\cW_2,\cW_3,\ldots$, with $\cW_r$ representing 
the closure of the scaled set $\cup_Q Q^{-1} \mathscr{D}_Q^{(r)}$, 
where $\mathscr{D}_Q^{(r)}$  denotes the
set of pairs of denominators of  consecutive elements of $\SF_Q$ with exactly $r-1$ elements of $\cF_Q$ in between (see the representation on
the right-hand side of  Figure~\ref{FigureVandWs}).
In addition, every $\cW_r$, $r\ge 3$, can be
further partitioned into $r-1$ (possibly  empty) polygons $\cW_{r,i}$, with $i\in \{2,\ldots,r\}$ being the (unique)
place where the mediant of consecutive denominators is being attained, that is
$q_i =q_1 +q_{r+1}$. The boundaries of the polygons $\cW_{r,i}$ are computable, as illustrated in
Section~\ref{partition} for $r\le 5$. See also Figures~\ref{FigureDenominatorsApart1200} and \ref{FigureVandWs}.

If the non-negative rational numbers $a_1/q_1 < a_2/q_2 < a_3/q_3$ satisfy $a_2 q_1 -a_1 q_2 =1
=a_3 q_2 -a_2 q_3$, then
\begin{equation*}
\frac{q_1+q_3}{q_2} =\frac{a_1+a_3}{a_2} =a_3 q_1 -a_1 q_3 \in \NN.
\end{equation*}
Furthermore, when $a_1/q_1 < a_2/q_2 < a_3/q_3$ are consecutive in $\cF_Q^*$, we have
\begin{equation*}
\frac{q_1+q_3}{q_2} =\bigg\lfloor \frac{Q+q_1}{q_2}\bigg\rfloor =\kappa \bigg( \frac{q_1}{Q},\frac{q_2}{Q}\bigg)
=: \nu \bigg( \frac{a_2}{q_2}\bigg).
\end{equation*}
This quantity, called the \emph{index} of the fraction $a_2/q_2$ (in $\cF_Q$), plays a critical role in the distribution
of the Farey sequence $(\cF_Q)_Q$ and has been investigated by several authors
(see, e.g., \cite{BCZ2001,HS2003,BGZ2002,ALZ2007,ALVZ2008,CCG2024,
Kor2025a,Kor2025b,Li2023-1,Li2023-2}).
It would be interesting to find an analogue formula for the index $(q_1+q_3)/q_2$ when $a_1/q_1 <
a_2/q_2 < a_3/q_3$ are consecutive elements in $\SF_Q$.

One nice feature of the index, proved in \cite[Theorem 1]{HS2003} through an elementary mediant 
argument, is the identity
\begin{equation}\label{index}
\sum\limits_{i=1}^{R_Q} \nu (f_i) =3R_Q -1,
\end{equation}
where $R_Q :=\# \cF_Q =\varphi(1)+\varphi(2)+\cdots +\varphi(Q)$, 
$f_1 =1/Q<\cdots < f_{R_Q}=1/1$
denote the elements of $\cF_Q$, $f_0:=0/1$ and $f_{1+R(Q)}:=1+f_1$.
In Section~\ref{Sect5} we show that a similar identity holds when $\cF_Q$ is replaced by $\SF_Q$.
Setting $S_Q :=\# \SF_Q$, we enumerate the elements of $\SF_Q$ as
$\gamma_1 =1/(Q-2) <\gamma_2 < \cdots < \gamma_{S_Q}=1/1$, and take
$\gamma_0 :=0/1$, $\gamma_{1+S_Q}:=1+\gamma_1$, $\gamma_i=a_i/q_i$. A quick argument as in the proof of~\eqref{index} 
provides the following statement.

\begin{proposition}\label{Prop3}
For every $Q\ge 3$, we have
\begin{equation*}
\sum\limits_{i=1}^{S_Q} \frac{q_{i-1}+q_{i+1}}{q_i} =3S_Q -1.
\end{equation*}
\end{proposition}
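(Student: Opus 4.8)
The plan is to run the elementary mediant induction of Hall and Shiu that proves \eqref{index}, but phrased as a \emph{deletion} procedure. It is cleanest to prove the slightly more general statement: for any modular partition $0/1 = p_0 < p_1 < \dots < p_n = 1/1$ of $[0,1]$, writing $p_i = a_i/b_i$ in lowest terms and adopting the cyclic convention $b_{n+1} := b_1$ (as in the Proposition, where $b_0 = q_0 = 1$), one has
\[
\Sigma(P) := \sum_{i=1}^{n} \frac{b_{i-1}+b_{i+1}}{b_i} = 3n-1 .
\]
Since $\SF_Q^* := \{0\}\cup\SF_Q$ is a modular partition with $n = S_Q$ (appendix of \cite{ABCZ2025}), and its enumeration is precisely $\gamma_i = a_i/q_i$ with $\gamma_0 = 0/1$ and $\gamma_{1+S_Q} = \gamma_1$, the Proposition is the case $P = \SF_Q^*$. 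I would argue by induction on $n$, the base case $n=1$ being $P = \{0/1, 1/1\}$, for which $\Sigma(P) = (b_0+b_2)/b_1 = (1+1)/1 = 2 = 3\cdot 1 - 1$.

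The structural heart of the step $n \ge 2$ is the claim that the interior node of largest denominator is the mediant of its neighbours: if $1 \le j \le n-1$ and $b_j = \max_{1\le k\le n-1} b_k$, then $b_j = b_{j-1} + b_{j+1}$ and $a_j = a_{j-1}+a_{j+1}$. To see the denominator identity, combine the modular relations $a_j b_{j-1} - a_{j-1} b_j = 1$ and $a_{j+1}b_j - a_j b_{j+1} = 1$ into the congruences $a_j b_{j-1} \equiv 1$ and $a_j b_{j+1} \equiv -1 \pmod{b_j}$; adding them and using $\gcd(a_j,b_j)=1$ gives $b_j \mid b_{j-1}+b_{j+1}$, while $b_{j-1},b_{j+1} \le b_j$ (each neighbour is an interior node, or else is $0/1$ or $1/1$ of denominator $1$) together with $0 < b_{j-1}+b_{j+1}\le 2b_j$ and $b_j \ge 2$ force $b_{j-1}+b_{j+1} = b_j$ (equality $2b_j$ would make $b_{j-1}=b_{j+1}=b_j$, hence $b_j\mid 1$ via the first relation). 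The same two relations then yield $a_{j+1}b_{j-1} - a_{j-1}b_{j+1} = 1$ and $a_j = a_{j-1}+a_{j+1}$, so $p_{j-1}$ and $p_{j+1}$ are Farey neighbours and deleting $p_j$ produces a modular partition $0/1 = p_0' < \dots < p_{n-1}' = 1/1$ to which the inductive hypothesis (at parameter $n-1$) applies.

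It remains to check $\Sigma(P) = \Sigma(P\setminus\{p_j\}) + 3$. In passing from $P$ to $P\setminus\{p_j\}$: the term $(b_{j-1}+b_{j+1})/b_j = b_j/b_j = 1$ of $p_j$ is removed; in the term of the left neighbour the neighbour-denominator $b_j$ is replaced by $b_{j+1}$, a net change of $(b_{j+1}-b_j)/b_{j-1} = -b_{j-1}/b_{j-1} = -1$; symmetrically the term of the right neighbour changes by $-1$; and every other term is unchanged. The cyclic convention $b_{n+1} = b_1$ makes this uniform, including the edge cases $j=1$ and $j=n-1$, in which the affected neighbour term is the wrap-around term attached to $p_n = 1/1$ (which involves $b_1$). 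Hence $\Sigma(P) = \bigl(3(n-1)-1\bigr) + 3 = 3n-1$, completing the induction and the proof. I expect the only step needing genuine (if brief) work to be the structural lemma of the preceding paragraph; the rest is the same bookkeeping as in \cite{HS2003}, which is why the argument is short.
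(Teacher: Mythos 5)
Your argument is correct, but it is organised differently from the paper's. The paper inducts on $Q$: it invokes the fact (Lemma A.1 of \cite{ABCZ2025}, restated as Lemma~\ref{L10}) that $\SF_Q$ is obtained from $\SF_{Q-1}$ by a batch of mediant \emph{insertions}, and checks that each single insertion raises the index sum by exactly $3$ while raising the cardinality by $1$, so the identity propagates from $\SF_3=\{1/1\}$. You instead prove the more general statement that $\Sigma(P)=3n-1$ for \emph{every} modular partition $P$ of $[0,1]$ with $n$ nodes beyond $0/1$, by induction on $n$ via \emph{deletion} of the interior node of maximal denominator; the extra structural lemma you supply (that this node must be the mediant of its two neighbours, proved from the two determinant relations modulo $b_j$) is what replaces the appeal to Lemma~\ref{L10}. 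The local bookkeeping is identical in substance — your three $-1$ changes upon deletion are exactly the paper's $B_i=A_i+3$ upon insertion — but your route is self-contained (it does not depend on the insertion dynamics of $\SF_Q$) and proves a slightly more general fact, at the cost of establishing the max-denominator-is-a-mediant lemma. Both are in the spirit of Hall--Shiu; the details of your structural lemma and the edge cases $j\in\{1,n-1\}$ check out.
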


\begin{figure}[thb]
\centering
\hfill
\includegraphics[width=0.48\textwidth]{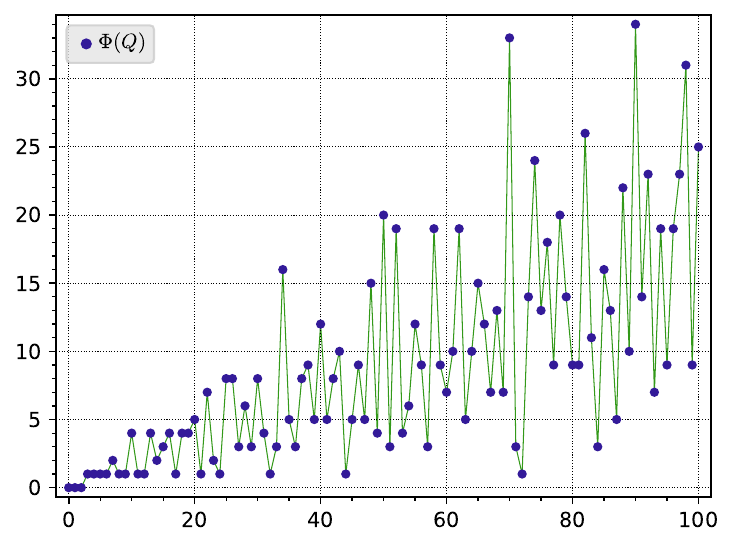}
\includegraphics[width=0.48\textwidth]{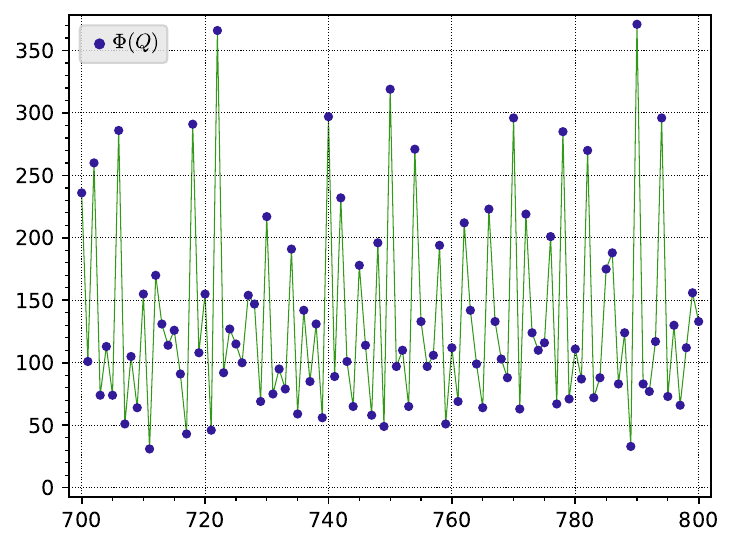}
\hfill\mbox{}
\caption{The highly oscillatory behavior of $\Phi(Q)$.
}
\label{FigureGraphOfPhi}
\end{figure}

Here, the function 
\begin{equation*}
\Phi(Q):=S_Q -S_{Q-1} =\# \bigg\{ \frac{a}{q} \in \QQ \cap (0,1] : q+a+\va =Q\bigg\}
\end{equation*}
arises as a substitute for the
Euler function $\varphi (Q)=R_Q -R_{Q-1}$, measuring the number of mediant insertions from
level $Q-1$ to level $Q$. 
For the correlated study of pairs 
$(a/q,\va/q)$ see~\cite{Hum2022, Shp2012, BRS2024} and the references therein.
Nevertheless, the arithmetic function  $\Phi$ is not multiplicative and exhibits an erratic behavior (see Figure~\ref{FigureGraphOfPhi}).

The asymmetry of the fractions in $\SF _Q$, noticed in \eqref{assymetry},
does not imply that in the case of symmetric fractions the one on the left will always appear before the one on the right by insertion.
For example, $5/9\in \SF _{16}$, but $4/9$ first appears in $\SF _{20}$.
On a different  note, we remark that the circle packing generated by $\SF _Q$ 
belongs to the class of those produced by various rules that delay the insertion
(see the examples in~\cite{OH2000, GM2010}), 
but ultimately, even though the partial configurations are different, 
the integer packing produced by $\SF _Q$ will coincide with 
that produced by $\cF _Q$.

\section{\texorpdfstring{The inclusion $\mathscr{D}\subseteq \cV$}{The inclusion scrD subseteq cV}}\label{Sect2}
Let $r\ge 1$. Suppose $a_1/q_1 <\cdots <a_{r+1}/q_{r+1}$ are consecutive elements in $\cF_Q$, $a_1/q_1 ,a_{r+1}/q_{r+1}\in \SF_Q$ and
$a_2/q_2,\ldots ,a_r/q_r \notin \SF_Q$ (in other words $a_1/q_1 <a_{r+1}/q_{r+1}$ are consecutive elements in $\SF_Q$).
 Denote by $\mathscr{D}_Q^{(r)}$ the set of denominator pairs $(q_1,q_{r+1})$ of consecutive elements of $\SF_Q$, with $q_1$ and $q_{r+1}$ as above.

We start with the easiest case $r=1$.

\begin{lemma}\label{L4}
$\mathscr{D}_Q^{(1)} \subseteq  Q \cV_1$.
\end{lemma}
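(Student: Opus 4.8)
The plan is to extract, from membership of the two endpoints in $\SF_Q$ together with the Farey-neighbour relation, precisely the linear inequalities that cut out $\cV_1$. Fix $(q_1,q_2)\in\mathscr{D}_Q^{(1)}$, so that $a_1/q_1<a_2/q_2$ are consecutive both in $\cF_Q$ and in $\SF_Q$; in particular $\gcd(q_1,q_2)=1$, $a_2q_1-a_1q_2=1$, $a_1,a_2\ge1$, and $q_1+q_2\ge Q+1$ (otherwise the mediant $(a_1+a_2)/(q_1+q_2)$ would lie between them inside $\cF_Q$). I would first dispose of the degenerate case $q_2=1$: then $a_2/q_2=1/1$, its Farey predecessor in $\cF_Q$ is $(Q-1)/Q$, and $h((Q-1)/Q)=3Q-2>Q$, so $(Q-1)/Q\notin\SF_Q$, contradicting $(q_1,q_2)\in\mathscr{D}_Q^{(1)}$; hence $q_2\ge2$ and $\overline{a_2}$ is the ordinary inverse in $[1,q_2)$.

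The heart of the argument — the one step I expect to be non-routine — is the inequality $q_1\ge q_2$ (equivalently $y\le x$). From $a_2q_1\equiv1\pmod{q_2}$ one gets $q_1\equiv\overline{a_2}\pmod{q_2}$; were $q_1<q_2$, then $q_1$ and $\overline{a_2}$ would both lie in $[1,q_2)$ and be congruent modulo $q_2$, forcing $q_1=\overline{a_2}$, whence $h(a_2/q_2)=q_2+a_2+\overline{a_2}=(q_1+q_2)+a_2\ge Q+2>Q$, contradicting $a_2/q_2\in\SF_Q$. Thus $q_1\ge q_2$, and since $\gcd(q_1,q_2)=1$ with $q_2\ge2$, in fact $q_1>q_2$. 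From $a_1q_2\equiv-1\pmod{q_1}$ I then read off $a_1(q_1-q_2)\equiv1\pmod{q_1}$ with $q_1-q_2\in\{1,\dots,q_1-1\}$, so $\overline{a_1}=q_1-q_2$ and therefore $h(a_1/q_1)=2q_1-q_2+a_1$.

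The remaining inequalities should fall out by elementary manipulation. Combining $q_1\ge q_2$ with $q_1+q_2\ge Q+1$ gives $q_1\ge(Q+1)/2$, so $q_1/Q\ge 1/2$; also $h(a_1/q_1)\le Q$ with $a_1,\overline{a_1}\ge1$ forces $q_1\le Q-2$, and $q_2<q_1$, so $(q_1/Q,q_2/Q)$ lies in $[1/2,1]\times(0,1)$. From $h(a_1/q_1)=2q_1-q_2+a_1\le Q$ and $a_1\ge1$ one gets $2q_1-q_2\le Q-1$, i.e.\ $y\ge 2x-1$; feeding $2q_1\le Q-1+q_2$ into the consequence $2q_1\ge 2(Q+1)-2q_2$ of the Farey gap yields $Q+3\le 3q_2$, so $y\ge 1/3$; and $q_1+q_2\ge Q$ gives $y\ge 1-x$. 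Writing $(x,y):=(q_1/Q,q_2/Q)$, we have then shown $x\in[1/2,1]$, $y\in[1/3,1]$ and $\max\{1-x,2x-1\}\le y\le x$, i.e.\ $(x,y)\in\cV_1$, which is the claim. The only real obstacle is establishing that the left-hand denominator dominates; after that the statement is linear bookkeeping on the two height inequalities and the Farey gap $q_1+q_2\ge Q+1$.
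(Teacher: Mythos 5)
Your proof is correct and follows essentially the same route as the paper: the paper's two-line argument imports from Section 3.1 of \cite{ABCZ2025} exactly the facts you re-derive, namely $q_2<q_1$ (via $\bar{a}_2=q_1$ when $q_1<q_2$ forcing $h(a_2/q_2)>Q$) and $h(a_1/q_1)=2q_1-q_2+a_1=3q_1-q_2-\bar{q}_2$, and then performs the same linear bookkeeping with $q_1+q_2>Q$ to get $q_1\ge Q/2$, $q_2>2q_1-Q$ and $q_2>Q-q_1$. Your version is merely self-contained (including the harmless $q_2=1$ check), so there is nothing to correct.
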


\begin{proof}
As in Section 3.1 of \cite{ABCZ2025}, in this case we have $q_2<q_1$, so $q_1 \in [Q/2,Q]$, and also
$3q_1 -Q-q_2 \le\vq_2 <q_1$ with $\vq_2$ denoting the multiplicative inverse of $q_2 \pmod{q_1}$ in $[1,q_1)$,
 leading to $q_2 >2q_1-Q$. We also employ the trivial inequality $q_2 >Q-q_1$.
\end{proof}

The situation $r\ge 2$ requires additional work.
The following critical equality was proved in the appendix to \cite{ABCZ2025}:
\begin{equation}\label{unimodular}
a_{r+1} q_1 - a_1 q_{r+1}=1 .
\end{equation}

We consider the index of $a_i/q_i$ in $\cF_Q$, given by
\begin{equation*}
\nu_i:=\kappa \bigg( \frac{q_{i-1}}{Q}, \frac{q_i}{Q}\bigg) =\bigg\lfloor \frac{Q+q_{i-1}}{q_{i}}\bigg\rfloor,
\end{equation*}
so that $q_{i+1}=\nu_i q_i -q_{i-1}$, or equivalently
\begin{equation*}
(q_{i+1},q_i)=(q_i,q_{i-1}) \left( \begin{matrix} \nu_i & 1 \\  -1 & 0 \end{matrix}\right), \quad  i=2,\ldots ,r.
\end{equation*}
The noncommutative polynomials defined by $K^F_{-1}(\cdot) =0$, $K^F_0 (\cdot)=1$, 
\mbox{$K^F_1 (x)=x$}, and
\begin{equation*}
K^F_\ell (x_1,\ldots,x_\ell) =x_\ell K^F_{\ell -1} (x_1,\ldots,x_{\ell-1}) -K^F_{\ell-2} (x_1,\ldots,x_{\ell-2}) ,
\end{equation*}
satisfy
\begin{equation}\label{matrix}
 \left( \begin{matrix} K^F_{i-1} (\nu_2,\ldots,\nu_i )  &  K^F_{i-2} (\nu_2 ,\ldots, \nu_{i-1}) \\  
- K^F_{i-2} (\nu_3,\ldots,\nu_i) &  - K^F_{i-3} (\nu_3,\ldots,\nu_{i-1})\end{matrix}\right)  
= \left( \begin{matrix} \nu_2 & 1 \\ -1 & 0 \end{matrix}\right) \cdots 
\left( \begin{matrix} \nu_i & 1 \\ -1 & 0 \end{matrix} \right),\quad i\ge 2,
\end{equation}
leading to 
\begin{equation*}
(q_{i+1},q_i) = (q_2,q_1) \left( \begin{matrix} K^F_{i-1} (\nu_2,\ldots,\nu_i )  &  K^F_{i-2} (\nu_2 ,\ldots, \nu_{i-1}) \\  
- K^F_{i-2} (\nu_3,\ldots,\nu_i) &  - K^F_{i-3} (\nu_3,\ldots,\nu_{i-1})\end{matrix}\right),\quad  i=2,\ldots,r.
\end{equation*}
Since $a_{i+1} =\nu_i a_i -a_{i-1}$, we also have
\begin{equation*}
(a_{i+1},a_i) = (a_2,a_1) \left( \begin{matrix} K^F_{i-1} (\nu_2,\ldots,\nu_i )  &  K^F_{i-2} (\nu_2 ,\ldots, \nu_{i-1}) \\  
- K^F_{i-2} (\nu_3,\ldots,\nu_i) &  - K^F_{i-3} (\nu_3,\ldots,\nu_{i-1})\end{matrix}\right),\quad  i=2,\ldots,r.
\end{equation*}

\begin{lemma}\label{L5}
$K^F_{r-1} (\nu_2,\ldots,\nu_{r})=1.$
\end{lemma}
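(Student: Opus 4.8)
The plan is to combine the two matrix factorizations recorded just above —the one for the denominators and its twin for the numerators— with the modular identity \eqref{modular} and the classical Farey-neighbour relation for the first two fractions. For $r=1$ there is nothing to prove, since $K^F_0(\cdot)=1$ by definition; so assume $r\ge 2$.

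First I would specialize the identities $(q_{i+1},q_i)=(q_2,q_1)(\cdots)$ and $(a_{i+1},a_i)=(a_2,a_1)(\cdots)$ to $i=r$ and read off the first coordinate of each, obtaining
\[
q_{r+1}=q_2\,K^F_{r-1}(\nu_2,\ldots,\nu_r)-q_1\,K^F_{r-2}(\nu_3,\ldots,\nu_r),\qquad
a_{r+1}=a_2\,K^F_{r-1}(\nu_2,\ldots,\nu_r)-a_1\,K^F_{r-2}(\nu_3,\ldots,\nu_r).
\]
Then I would form the cross-product $a_{r+1}q_1-a_1q_{r+1}$: substituting both expressions, the two terms carrying the factor $K^F_{r-2}(\nu_3,\ldots,\nu_r)$ are $-a_1q_1 K^F_{r-2}(\nu_3,\ldots,\nu_r)$ in each, so they cancel identically, and what remains collapses to
\[
a_{r+1}q_1-a_1q_{r+1}=(a_2q_1-a_1q_2)\,K^F_{r-1}(\nu_2,\ldots,\nu_r).
\]

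To conclude, note that $a_1/q_1<a_2/q_2$ are consecutive in $\cF_Q$, so the standard Farey relation gives $a_2q_1-a_1q_2=1$, while \eqref{modular} gives $a_{r+1}q_1-a_1q_{r+1}=1$; hence $K^F_{r-1}(\nu_2,\ldots,\nu_r)=1$. I do not anticipate a genuine obstacle: the argument is purely formal, and the only step needing a moment's care is verifying that the $K^F_{r-2}(\nu_3,\ldots,\nu_r)$-terms really do cancel, which is transparent once the two scalar identities are written side by side. The point of \eqref{modular} is essential here — it is exactly what upgrades the otherwise tautological statement "$a_{r+1}q_1-a_1q_{r+1}$ equals this index-type continuant" into the asserted value $1$.
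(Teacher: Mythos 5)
Your proof is correct and is essentially the same as the paper's: the paper also reads off $a_{r+1}=K^\prime a_2-K^{\prime\prime}a_1$ from the matrix identity, forms $a_{r+1}q_1-a_1q_{r+1}=K^\prime(a_2q_1-a_1q_2)$, and concludes $K^\prime=1$ from \eqref{modular} together with the Farey-neighbour relation $a_2q_1-a_1q_2=1$. No substantive difference beyond notation.
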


\begin{proof}
Denoting by $\left( \begin{smallmatrix} K^\prime & \ast \\  -K^{\prime\prime}  & \ast \end{smallmatrix}\right)$   the matrix given by \eqref{matrix} for $i=r$, we have 
\begin{equation*}
q_{r+1}=K^\prime q_2 -K^{\prime\prime} q_1 \quad \text{and}\quad 
a_{r+1}=K^\prime a_2 -K^{\prime\prime} a_1.
\end{equation*}
Taking stock of \eqref{unimodular} and using the fact that $a_1/q_1 <a_2/q_2$ are consecutive elements in $\cF_Q$, so $a_2 q_1-a_1 q_2 =1$, we find
\begin{equation*}
1=a_{r+1} q_1 -a_1 q_{r+1} =K^\prime (a_2 q_1 -a_1 q_2)=K^\prime ,
\end{equation*}
as desired.

\end{proof}

\begin{lemma}\label{L6}
For every $r\geq 2$, we have $q_1+q_{r+1} \le Q$.
\end{lemma}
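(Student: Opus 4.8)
The plan is to combine the unimodular relation \eqref{modular} for the pair $a_1/q_1 < a_{r+1}/q_{r+1}$ with the hypothesis $r\ge 2$, which guarantees the presence of an intermediate fraction: $a_2/q_2$ lies in $\cF_Q$ and satisfies $a_1/q_1 < a_2/q_2 < a_{r+1}/q_{r+1}$. Although $a_1/q_1$ and $a_{r+1}/q_{r+1}$ need not be consecutive in $\cF_Q$, equation \eqref{modular} says they form a unimodular (Farey-neighbour) pair ``at level'' $q_1+q_{r+1}$, and the classical mediant (Stern--Brocot) bound then forces every rational strictly between them to have denominator at least $q_1+q_{r+1}$. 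Applying this to $a_2/q_2$ and using $q_2\le Q$ will immediately give $q_1+q_{r+1}\le Q$.

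Carrying this out: since $a_1/q_1 < a_2/q_2 < a_{r+1}/q_{r+1}$, both $a_2 q_1 - a_1 q_2$ and $a_{r+1}q_2 - a_2 q_{r+1}$ are positive integers, hence at least $1$. Then, by \eqref{modular},
\begin{align*}
q_2 = q_2\bigl( a_{r+1}q_1 - a_1 q_{r+1}\bigr) &= q_1\bigl( a_{r+1}q_2 - a_2 q_{r+1}\bigr) + q_{r+1}\bigl( a_2 q_1 - a_1 q_2\bigr) \\
&\ge q_1 + q_{r+1} .
\end{align*}
As $a_2/q_2\in\cF_Q$ forces $q_2\le Q$, this establishes $q_1+q_{r+1}\le Q$.

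I do not anticipate a real obstacle, as the whole argument collapses to the elementary mediant computation above. The one point worth flagging is that it genuinely breaks for $r=1$: there is then no fraction strictly between $a_1/q_1$ and $a_{r+1}/q_{r+1}$, the mediant bound is vacuous, and indeed Lemma \ref{L4} shows that $\mathscr{D}_Q^{(1)}$ reaches the part of $Q\cV_1$ with $q_1+q_2 > Q$. It is also worth noting that for $r\ge 2$ this step requires neither the continuant identities behind \eqref{matrix} and Lemma \ref{L5} nor the non-membership of $a_2/q_2,\dots,a_r/q_r$ in $\SF_Q$; only \eqref{modular}, the ordering of the fractions, and $q_2\le Q$ are used.
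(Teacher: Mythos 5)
Your proof is correct, but it takes a different route from the paper. The paper argues by contradiction: if $q_1+q_{r+1}>Q$, then since \eqref{modular} gives $\gcd(q_1,q_{r+1})=1$, the pair $(q_1,q_{r+1})$ is a primitive lattice point in the triangle $Q\cT$, and by the bijection between such points and denominators of consecutive elements of $\cF_Q$ the fractions $a_1/q_1$ and $a_{r+1}/q_{r+1}$ would be consecutive in $\cF_Q$, contradicting $r\ge 2$. You instead prove the classical mediant bound directly: writing
\begin{equation*}
q_2 = q_2\bigl(a_{r+1}q_1-a_1q_{r+1}\bigr) = q_1\bigl(a_{r+1}q_2-a_2q_{r+1}\bigr)+q_{r+1}\bigl(a_2q_1-a_1q_2\bigr)\ \ge\ q_1+q_{r+1},
\end{equation*}
since both cross-products are positive integers, and then $q_2\le Q$ finishes the argument. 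Both proofs ultimately rest on the same fact — a rational strictly between a unimodular pair has denominator at least the sum of their denominators — but you prove it inline with an elementary identity, whereas the paper imports it through the geometric lattice-point characterization of consecutive Farey denominators that it also uses elsewhere. Your version is self-contained, needs only \eqref{modular}, the ordering, and $q_2\le Q$ (as you correctly flag, neither the continuant machinery nor non-membership in $\SF_Q$ is needed), and in fact yields the slightly stronger conclusion $q_1+q_{r+1}\le q_2$, which is consonant with the paper's later observation (in Lemma \ref{L7}) that $q_1+q_{r+1}$ equals some intermediate denominator $q_i$; the paper's version buys brevity by citing the established bijection. Your remark that the statement genuinely fails for $r=1$ is also accurate, since consecutivity in $\cF_Q$ forces $q_1+q_2>Q$ in that case.
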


\begin{proof}
Suppose, by contradiction, that $q_1+q_{r+1} >Q$. Equality \eqref{unimodular} yields 
$\gcd (q_1,q_{r+1})=1$. Using the inequality $q_1 +q_{r+1}>Q$ and the bijection between pairs of denominators of consecutive 
elements in $\cF_Q$ and primitive lattice points $(q,q^\prime)$ in the triangle $Q\cT$, it follows that 
$q_1$ and $q_{r+1}$ are denominators of two consecutive elements in $\cF_Q$. But this is a contradiction because $r\ge 2$ and
$a_1 /q_1 <a_2/q_2< a_{r+1}/q_{r+1}$.
\end{proof}

\begin{lemma}\label{L7}
For every $r\ge 2$, we have $3q_1+2q_{r+1} >Q$.
\end{lemma}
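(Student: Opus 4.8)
The plan is to pin down a single fraction lying strictly between $a_1/q_1$ and $a_{r+1}/q_{r+1}$ whose failure to lie in $\SF_Q$ already forces the bound; the natural candidate is the mediant $m:=(a_1+a_{r+1})/(q_1+q_{r+1})$. First I would verify that $m$ is a genuine element of $\cF_Q$ lying strictly between its two parents. The determinant identity coming from \eqref{modular}, namely $(a_1+a_{r+1})q_1-a_1(q_1+q_{r+1})=a_{r+1}q_1-a_1q_{r+1}=1$, shows at once that $\gcd(a_1+a_{r+1},q_1+q_{r+1})=1$; the denominator $q_1+q_{r+1}$ is at most $Q$ by Lemma \ref{L6}; and a mediant always lies strictly between the two fractions forming it. Since $a_1/q_1<a_{r+1}/q_{r+1}$ are consecutive in $\SF_Q$, this $m$ must be one of $a_2/q_2,\dots,a_r/q_r$, hence $m\notin\SF_Q$, i.e. $h(m)>Q$.

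Next I would evaluate $h(m)$. The same identity gives $(a_1+a_{r+1})q_1\equiv 1\pmod{q_1+q_{r+1}}$, so the multiplicative inverse of $a_1+a_{r+1}$ modulo $q_1+q_{r+1}$, taken in $[1,q_1+q_{r+1})$, equals exactly $q_1$ (note $1\le q_1<q_1+q_{r+1}$). Therefore $h(m)=(q_1+q_{r+1})+(a_1+a_{r+1})+q_1>Q$, that is $2q_1+q_{r+1}+a_1+a_{r+1}>Q$. Finally, because every fraction in $\SF_Q$ lies in $(0,1]$ we have $a_1\le q_1$ and $a_{r+1}\le q_{r+1}$, whence $a_1+a_{r+1}\le q_1+q_{r+1}$; adding this to the previous inequality gives $3q_1+2q_{r+1}=(2q_1+q_{r+1})+(q_1+q_{r+1})\ge(2q_1+q_{r+1})+(a_1+a_{r+1})>Q$, as claimed.

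The only delicate point is the step that places $m$ inside $\cF_Q$, and this is precisely where the hypothesis $r\ge 2$ enters, through Lemma \ref{L6}: for $r=1$ the mediant of two consecutive Farey fractions has denominator exceeding $Q$, the relation $h(m)>Q$ becomes vacuous, and the whole argument degenerates — which is why that case had to be handled separately in Lemma \ref{L4}. Everything else is a short computation, so I expect no real obstacle beyond recognizing that the mediant is the right object to test.
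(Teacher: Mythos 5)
Your proof is correct, and it takes a genuinely different route from the paper's. The paper also begins by using Lemma~\ref{L6} to place the mediant denominator among the intermediate ones, $q_i=q_1+q_{r+1}$, but it then determines the neighbouring denominator via the continuant/matrix relations (Lemma~\ref{L5} and \eqref{K-nu}), obtaining $q_{i+1}=Aq_1+(A+1)q_{r+1}$ and $\lfloor q_{i+1}/q_i\rfloor=A$, and finally feeds this into an inequality quoted from \cite[(16)]{ABCZ2025} (essentially $h(a_i/q_i)>Q$ expressed through $q_i,q_{i+1},a_i$) to conclude $Q<3q_1+2q_{r+1}$. You bypass all of that bookkeeping: the mediant $a_\mystar/q_\mystar=(a_1+a_{r+1})/(q_1+q_{r+1})$ is reduced by \eqref{modular} and lies strictly between two consecutive elements of $\SF_Q$, hence $h(a_\mystar/q_\mystar)>Q$, and the congruence $(a_1+a_{r+1})q_1\equiv 1 \pmod{q_1+q_{r+1}}$ identifies the relevant inverse as $q_1$, giving $h(a_\mystar/q_\mystar)=2q_1+q_{r+1}+a_1+a_{r+1}\le 3q_1+2q_{r+1}$; this is exactly the content of Lemma~\ref{L12}(iii), which the paper only establishes later, in Section~\ref{Sect3}. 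Your argument is shorter and self-contained (no Lemma~\ref{L5}, no matrix computation, no appeal to the companion paper's inequality (16)); in fact the step identifying $a_\mystar/q_\mystar$ with one of $a_2/q_2,\dots,a_r/q_r$ --- and hence Lemma~\ref{L6} itself --- is not even essential, since $a_\mystar/q_\mystar\notin\SF_Q$ already follows from its lying strictly between consecutive elements of $\SF_Q$ and the definition of $\SF_Q$ through the height $h$. The paper's approach, on the other hand, produces along the way the structural information $q_{i+1}=Aq_1+(A+1)q_{r+1}$ about the fraction following the mediant, which is of independent use in the partition analysis of Section~\ref{partition}. One small side remark of yours is slightly off: for $r=1$ the relation $h(a_\mystar/q_\mystar)>Q$ is not vacuous --- it still holds, and the conclusion $3q_1+2q_2>Q$ is then trivially true because $q_1+q_2>Q$ --- but this does not affect the case $r\ge 2$ treated by the lemma.
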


\begin{proof}
As $a_1/q_1 < \cdots < a_{r+1}/q_{r+1}$ are consecutive in $\cF_Q$,  Lemma~\ref{L6} yields $q_1+q_{r+1} =q_i$ for some $i\in \{ 2,\ldots,r\}$. We have
\begin{equation*}
(q_{i+1} ,q_i) =(q_2,q_1) \left( \begin{matrix} a & b \\ c & d \end{matrix}\right) =
(q_{r+1} ,q_r) \left( \begin{matrix} \alpha & \beta \\ \gamma & \delta \end{matrix} \right)
\end{equation*}
for some integers $a,b,c,d,\alpha,\beta,\gamma,\delta$ with $ad-bc =1=\alpha \delta -\beta \gamma$. 
Lemma \ref{L5} yields
\begin{equation}\label{K-nu}
(q_{r+1},q_r) =(q_2,q_1) \left( \begin{matrix} 1 & \nu \\ -K & 1-K \nu \end{matrix}\right)
\end{equation}
for some (positive) integers $K,\nu$, showing that $q_r=q_1+\nu q_{r+1}$. Hence, we gather
\begin{equation*}
\begin{cases}
 q_{i+1} = \alpha q_{r+1} +\gamma q_r = \alpha q_{r+1} +\gamma (q_1+\nu q_{r+1}) = \gamma q_1 +(\alpha+\nu \gamma) q_{r+1} \\ 
q_i = \beta q_{r+1}+\delta q_r = \beta q_{r+1} +\delta (q_1 +\nu q_{r+1}) =\delta q_1 +(\beta +\nu \delta) q_{r+1},
\end{cases}
\end{equation*}
which leads to 
\begin{equation*}
(q_{i+1},q_i) =(q_1 ,q_{r+1}) \left( \begin{matrix} A & B \\ C & D \end{matrix}\right) ,
\end{equation*}
with $A=\gamma$, $B=\delta$,  $C=\alpha +\nu \gamma$, $D=\beta+\nu\delta$ satisfying 
$AD-BC=\beta \gamma -\alpha \delta =-1$. Since $q_i =q_1+q_{r+1}$, we find $B=D=1$ and so $C=A+1$
and $q_{i+1}=Aq_1 +(A+1)q_{r+1}$ with $A\ge 0$. This yields $\lfloor q_{i+1}/q_i\rfloor = \lfloor (Aq_1+(A+1)q_{r+1})/(q_1+q_{r+1})\rfloor=A$.

Applying one of the inequalities in \cite[Eq.~(16)]{ABCZ2025} we infer
\begin{equation*}
\begin{split}
Q & < (2+\lfloor q_{i+1}/q_i\rfloor ) q_i -q_{i+1} +a_i \\  & \le (3+A) q_i -q_{i+1}
=(3+A)(q_1+q_{r+1})-\big( Aq_1 +(A+1)q_{r+1}\big) =3q_1+2q_{r+1} ,
\end{split}
\end{equation*}
as desired.
\end{proof}

\begin{proposition}\label{P8}
For every $r\ge 2$, we have
\begin{equation*}
\mathscr{D}_Q^{(r)} \subseteq Q\cV_2 \cup Q\cV_3 .
\end{equation*}
\end{proposition}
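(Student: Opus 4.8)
The plan is to derive $\mathscr{D}_Q^{(r)}\subseteq Q\cV_2\cup Q\cV_3$ (for $r\ge 2$) from the inequalities already at hand together with exactly one further one, after which membership in $\cV_2\cup\cV_3$ reduces to a short planar computation using the explicit description \eqref{eq3}. Fix $(q_1,q_{r+1})\in\mathscr{D}_Q^{(r)}$ and set $x:=q_1/Q$, $y:=q_{r+1}/Q$. From Lemma~\ref{L6} we have $q_1+q_{r+1}\le Q$, that is, $y\le 1-x$; from Lemma~\ref{L7} we have $3q_1+2q_{r+1}>Q$, that is, $y>(1-3x)/2$; and since $a_1/q_1\in\SF_Q$ and $q_{r+1}\ge 1$, also $x,y>0$ (with $x<1$, e.g.\ from $y\le 1-x$).

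The additional inequality I would establish is
\begin{equation*}
2q_1-q_{r+1}\le Q,\qquad\text{i.e.}\qquad y\ge 2x-1.
\end{equation*}
If $q_1\le Q/2$ this is immediate: $2q_1-q_{r+1}\le 2q_1\le Q$. So suppose $q_1>Q/2$. Then Lemma~\ref{L6} gives $q_{r+1}\le Q-q_1<q_1$, hence $1\le q_{r+1}\le q_1-1$. Reducing the identity \eqref{modular}, $a_{r+1}q_1-a_1q_{r+1}=1$, modulo $q_1$ yields $a_1\,(q_1-q_{r+1})\equiv 1\pmod{q_1}$, and since $q_1-q_{r+1}$ lies in $[1,q_1)$ this forces $\va_1=q_1-q_{r+1}$. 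As $a_1/q_1\in\SF_Q$, its height satisfies $q_1+a_1+\va_1\le Q$, i.e.\ $2q_1+a_1-q_{r+1}\le Q$, so $2q_1-q_{r+1}\le Q-a_1<Q$, as claimed.

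With the three inequalities $0<y\le 1-x$, $y>(1-3x)/2$ and $y\ge 2x-1$ in hand, I would conclude by a case analysis on the position of $x$. If $x\le 1/2$ and $y\ge x$, then $(x,y)\in\cV_2$: the bounds $\max\{(1-3x)/2,x\}\le y\le 1-x$ are immediate, and $y\ge 1/5$ follows from $y\ge x\ge 1/5$ when $x\ge 1/5$ and from $y>(1-3x)/2\ge 1/5$ when $x\le 1/5$. If $x\le 1/2$ and $y<x$, then $(x,y)\in\cV_3$: here $x>1/5$ (from $x>y>(1-3x)/2$), $2x-1\le 0<y$, and $y<x\le 1/2$, $y\le 1-x$. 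If $x>1/2$, then automatically $y\le 1-x<1/2<x$, while $2x-1\le y\le 1-x$ gives $x\le 2/3$, so again $(x,y)\in\cV_3$. In all cases $(x,y)\in\cV_2\cup\cV_3$, which is the assertion.

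The hard part is the bound $2q_1-q_{r+1}\le Q$; everything after it is bookkeeping with \eqref{eq3}. What makes that step go through is the modular identity \eqref{modular} — the very mechanism responsible for the mediant structure of $(\SF_Q)_Q$ — which pins $\va_1$ down in the relevant range, so that the membership $a_1/q_1\in\SF_Q$ turns into precisely the needed inequality. Note that, unlike in Lemma~\ref{L7}, this argument uses only the height bound at the left endpoint and not the mediant relation $q_i=q_1+q_{r+1}$.
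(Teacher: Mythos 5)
Your argument is correct, and its overall skeleton matches the paper's: reduce membership of $(q_1,q_{r+1})$ in $Q\cV_2\cup Q\cV_3$ to the three linear constraints $q_1+q_{r+1}\le Q$, $2q_1-q_{r+1}\le Q$, $3q_1+2q_{r+1}>Q$, the first and third being exactly Lemmas \ref{L6} and \ref{L7}, and then do the planar bookkeeping against \eqref{eq3} (which you carry out explicitly and correctly; the paper leaves that last step implicit). Where you genuinely diverge is the middle inequality. The paper obtains it from the matrix relation \eqref{K-nu} (which rests on Lemma \ref{L5}), writing $q_{r+1}=q_2-Kq_1$ and splitting into $K\ge 2$, where $q_1\le q_2/K\le Q/2$, and $K=1$, where it invokes $q_1<q_2$ together with the first inequality in (16) of \cite{ABCZ2025}, i.e.\ the height bound $h(a_1/q_1)=(2+\lfloor q_2/q_1\rfloor)q_1-q_2+a_1\le Q$ expressed through the $\cF_Q$-neighbour $q_2$. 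You bypass the continuant machinery and the intermediate denominator $q_2$ altogether: when $q_1>Q/2$, Lemma \ref{L6} forces $q_{r+1}<q_1$, the modular identity \eqref{modular} then pins down $\va_1=q_1-q_{r+1}$ (consistently with Lemma \ref{L11}(i) applied to the unimodular pair $a_1/q_1<a_{r+1}/q_{r+1}$), and $h(a_1/q_1)\le Q$ reads $2q_1+a_1-q_{r+1}\le Q$, which is the desired bound; the case $q_1\le Q/2$ is trivial. So both proofs ultimately rest on the same input, the height constraint at the left endpoint, but yours measures it against the $\SF_Q$-neighbour $q_{r+1}$ rather than the $\cF_Q$-neighbour $q_2$, which makes the step shorter and independent of the fact $q_1<q_2$ and of the $K$-dichotomy (that machinery is of course still behind Lemma \ref{L7}, which you, like the paper, quote for the third inequality; note the paper's sentence attributing it to Lemma \ref{L5} is a typo). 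The trade-off is that the paper's route, via $\nu,K\ge 1$, also re-derives the stronger statement $q_1+q_{r+1}\le q_r\le Q$, information your streamlined argument does not produce but which is not needed for the Proposition.
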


\begin{proof}
Lemmas~\ref{L6} and \ref{L7} provide $q_1+q_{r+1} \le  Q$ and $3q_1+2q_{r+1} \ge Q$, so it remains to prove the inequality
$q_{r+1} -2q_1 +Q \ge 0$.

With $K$ as in equality \eqref{K-nu}, the last inequality  is obsolete when $K\ge 2$. Indeed, 
it was observed in Sections 3.2 and 3.3 of \cite{ABCZ2025} that when $r\ge 2$ we have $q_1 <q_2$.
and we can use
$q_{r+1}=q_2 -Kq_1 > 0$ to infer $q_1 \le q_2/K \le Q/2$. When $K=1$, we have $q_{r+1}-2q_1=q_2 -3q_1$. Now, the first inequality in \cite[Eq.~(16)]{ABCZ2025},
where $q=q_2 > \alpha=q_1$,  yields $3q_1 -q_2 \le ( 2+[q_2/q_1]) q_1-q_2+a_1 \le Q$, which leads to $q_2-3q_1+Q \ge 0$. 
Finally, we notice that $K\le 0$ cannot occur, as $K\le -1$ would imply $Q\ge q_{r+1} = q_2 -Kq_1 \ge q_2+q_1 >Q$, a contradiction,
while $K=0$  yields $q_{r+1}=q_2$ and $q_r=q_1$, a contradiction as $r>1$.  
\end{proof}

\section{\texorpdfstring{The inclusion $\cV \subseteq \overline{\mathscr{D}}$ and the proof of Theorem \ref{Thm2}}
{The inclusion cV subseteq scrD}}\label{Sect3}
We start off the proof of Theorem \ref{Thm2} 
by fixing $(x_0,y_0)\in \cR$ and $\varepsilon_1,\varepsilon_2>0$ such that 
 \begin{equation*}
\cR:=[x_0-\varepsilon_1,x_0+\varepsilon_1] \times [y_0-\varepsilon_2,y_0+\varepsilon_2] =: I_1 \times I_2 \subseteq
\mathring{\cV}_1 \cup \mathring{\cV}_2  \cup \mathring{\cV}_3.
\end{equation*}
We will count the number of integer pairs $(q_1,q_2) \in \scrD_Q \cap Q\cR $.
The situation where $\cR \subseteq \mathring{\cV}_1$, so  $x_0+y_0 >1$, $x_0 > y_0$  and $1+y_0-2x_0 >0$, is considered first. 
Here, we can look for $q_1,q_2$ with $q_1+q_2 >Q>2q_1 -q_2$ and $q_1 >q_2$. Hence, they are   denominators of consecutive elements in $\cF_Q$ and the 
considerations from Section 3.1 of \cite{ABCZ2025} (see especially Corollary 6 thereby) yield

\begin{itemize}
\item[(i)]
$h(a_1/q_1)=q_1+a_1+\va_1 =3q_1-q_2 -\vq_2$, so $a_1/q_1 \in \SF_Q \Longleftrightarrow 3q_1 -q_2 -\vq_2 \le Q$.
\item[(ii)]
$q_2 <q_1$, so $h(a_1/q_1) \le Q \Longrightarrow h(a_2/q_2)=q_2 +a_2 +\vva_2 \le Q$,
\end{itemize}
where $\bar{n}$ denotes the multiplicative inverse of $n\pmod{q_1}$ in $[1,q_1)$ and $\bar{\bar{n}}$ denotes the multiplicative
inverse of $n\pmod{q_2}$ in $[1,q_2)$.

This shows that 
we need to count the integer triples $(q_1,q_2,b)$ 
such that\footnote{Here, $b$ plays the role of $\vq_2 =q_1 -a_1$. Note also that $3q_1-q_2-Q > 2q_1-Q >0$.} 
\begin{equation}\label{V1}
q_1 \in QI_1,\quad q_2\in QI_2, \quad q_ 2 b \equiv 1 \pmod{q_1}, \quad
3q_1 -q_2 -Q \le  b <q_1,
\end{equation}
with inequalities $ \max\{ Q-q_1,2q_1-Q\} < q_2 < q_1$ following from
\mbox{$\cR =I_1 \times I_2 \subseteq \mathring{\cV}_1$}.

For $u \in QI_1$, consider the set
\begin{equation*}
\Omega_{Q,u}^{(1)}:= \{ (v,w): v\in QI_2, F_Q (u,v)  :=3u -v-Q \le w \le G_Q(u,v) :=u\}.
\end{equation*}
We apply \cite[Lemma~3.1]{Boca2007} to each set $\Omega^{(1)}_{Q,q_1}$, $q_1 \in QI_1$, taking $T=\lfloor Q^{1/4}\rfloor$. 
Since $0\le v\le u$ on $Q\mathring{\cV}_1$, the  range $[\alpha ,\beta]$ of $v$ has length $\beta-\alpha \le q_1$
and the total variations of $F_Q(q_1,\cdot)$ and $G_Q(q_1,\cdot)$ satisfy $V_\alpha^\beta F_Q(q_1,\cdot) \le q_1$,
$V_\alpha^\beta G_Q(q_1,\cdot)=0$. We also have $\| F_Q (q_1,\cdot)\|_\infty \le \| G_Q (q_1,\cdot)\|_\infty \le q_1$. In the end, we infer that 
the number of integer triples $(q_1,q_2,b)$ that satisfy \eqref{V1} is given by
\begin{equation*}
S_1(Q)= \sum\limits_{q_1 \in QI_1}  \sum\limits_{\substack{q_2 \in QI_2 \\ 3q_1 - q_2 -Q \le b <q_1 \\  q_2 b \equiv 1 \pmod{q_1}}} 1  =
\sum\limits_{q_1 \in QI_1} \bigg( \frac{\varphi(q_1)}{q_1^2}\,\operatorname{Area}(\Omega_{Q,q_1}^{(1)})+O_\varepsilon (\cE_{q_1,\varepsilon}) \bigg),
\end{equation*}
with error terms summing to
\begin{equation*}
\sum\limits_{q_1 \in QI_1} \cE_{q_1,\varepsilon} \ll \sum\limits_{q_1 \in QI_1} \bigg( \frac{q_1}{Q^{1/4} q_1} \, q_1 +Q^{1/4}q_1^{1/2+\varepsilon}\bigg)  
\ll Q^{7/4+\varepsilon}.
\end{equation*}

The inequalities $0< Q+v-2u <u$, $(u,v) \in Q\cR  \subseteq Q\mathring{\cV}_1$, yield
\begin{equation*}
\operatorname{Area}(\Omega_{Q,u}^{(1)}) = \int_{QI_2} (Q+v-2u)\, dv \leq Qu .
\end{equation*}
Applying now \cite[Lemma~2.3]{BCZ2000} to the function 
\begin{equation*}
V(u)=\frac{1}{u} \, \operatorname{Area}(\Omega_{Q,u}^{(1)})
=\int_{QI_2} \frac{Q+v-2u}{u}\, dv,\quad u\in QI_1,
\end{equation*}
which is monotonically decreasing and $C^1$ on the interval $QI_1$, with $\| V\|_\infty \le Q$, we find
\begin{align*}
 S_1(Q) & =\frac{1}{\zeta (2)} \int_{QI_1} V(u)\, du  + O_\varepsilon (Q^{7/4+\varepsilon}) \\ & 
= \frac{1}{\zeta(2)} \int_{QI_1} du \int_{QI_2} dv \, \frac{Q+v-2u}{u}+O_\varepsilon (Q^{7/4+\varepsilon})  \\ &
=    \frac{Q^2}{\zeta(2)} \iint_{\cR} H_1 (x,y) \, dx dy,
\end{align*}
with 
\begin{equation}\label{H1}
H_1(x,y) =\frac{1+y-2x}{x} >0 \quad \text{ on $\mathring{\cV}_1$}.
\end{equation}
In conjunction with \eqref{DQasym}, this allows us to conclude the proof in this case.

Next, we look at the case where $x_0+y_0<1$. Although analogous to the previous one, this is slightly more involved
since there will be one or possibly more fractions in $\cF_Q$ between $a_1/q_1$ and $a_2/q_2$. 
In order to deal with these intermediate fractions, we use the following result:
\begin{lemma}[Lemma~A.1 in \cite{ABCZ2025}]\label{L10}
    Let $a_1/q_1 < a_2/q_2$, $0 \leq a_i < q_i$, with $a_2 q_1 - a_1 q_2 = 1$. Set $Q_i := h(a_i/q_i)$ and $Q_0 := \max\{Q_1, Q_2\}$. 
Consider the mediant $a_\mystar/q_\mystar := (a_1 + a_2)/(q_1 + q_2)$ and set $Q_\mystar:= h(a_\mystar/q_\mystar)$. Then
    \begin{enumerate}
        \item[$(1)$]   $Q_0 < Q_\mystar$.
        \item[$(2)$]   If we also assume that $a_1/q_1 < a_2/q_2$ are consecutive elements in $\SF_{Q_0}$, then $a_1/q_1 < a_2/q_2$ are also consecutive elements in 
$\SF_Q$ if $Q_0 \leq Q < Q_\mystar$, while $a_1/q_1 < a_\mystar/q_\mystar < a_2/q_2$ are consecutive in $\SF_{Q_\mystar}$.
    \end{enumerate}
\end{lemma}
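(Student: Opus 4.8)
The plan is to rewrite the three heights $Q_1,Q_2,Q_\mystar$ as explicit arithmetic expressions in $a_1,q_1,a_2,q_2$, which settles $(1)$ immediately, and then to feed $(1)$ recursively through the Stern--Brocot structure of the interval $(a_1/q_1,a_2/q_2)$ to obtain $(2)$. For $(1)$ I would start from $a_2q_1-a_1q_2=1$: this forces both fractions into lowest terms and yields $a_1q_2\equiv-1\pmod{q_1}$, $a_2q_1\equiv1\pmod{q_2}$, and $(a_1+a_2)q_1=1+a_1(q_1+q_2)\equiv1\pmod{q_1+q_2}$. Reading residues in the ranges required by the definition of $h$, the last congruence gives $\bar{a}_\mystar=q_1$, which lies in $[1,q_1+q_2)$ since $q_2\ge1$, so
\begin{equation*}
Q_\mystar=h\bigl(\tfrac{a_1+a_2}{q_1+q_2}\bigr)=(q_1+q_2)+(a_1+a_2)+q_1=2q_1+q_2+a_1+a_2 .
\end{equation*}
Likewise $\bar{a}_1\equiv-q_2\pmod{q_1}$ gives $\bar{a}_1\le q_1-1$, and $\bar{a}_2\equiv q_1\pmod{q_2}$ gives $\bar{a}_2\le q_1$ (equality when $q_1<q_2$, and $\bar{a}_2\le q_2-1\le q_1-1$ otherwise). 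Subtracting, $Q_\mystar-Q_1=q_1+q_2+a_2-\bar{a}_1>0$ and $Q_\mystar-Q_2=2q_1+a_1-\bar{a}_2\ge q_1+a_1>0$, which is precisely $Q_\mystar>Q_0$. The boundary cases $q_1=1$ or $q_2=1$, where the fraction $0/1$ or the convention for an inverse modulo $1$ intervenes, I would check directly.

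For $(2)$ the crux is a statement about $h$ alone: if $u_1/v_1<u_2/v_2$ are Farey neighbours (that is, $u_2v_1-u_1v_2=1$) with mediant $\mu:=(u_1+u_2)/(v_1+v_2)$, then every rational $a/q$ in lowest terms with $u_1/v_1<a/q<u_2/v_2$ satisfies $h(a/q)\ge h(\mu)$, with equality if and only if $a/q=\mu$. I would prove this by induction on the depth of $a/q$ in the Stern--Brocot subtree spanned by $u_1/v_1$ and $u_2/v_2$; the depth is finite because each mediant step strictly increases the sum of the endpoint denominators while keeping it $\le q$ (any rational strictly between two Farey neighbours has denominator at least the sum of theirs). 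Depth $0$ means $a/q=\mu$. If the depth is $\ge1$ then $a/q$ lies strictly between, say, $u_1/v_1$ and $\mu$ --- themselves Farey neighbours --- at depth one less, so the inductive hypothesis gives $h(a/q)\ge h(\mu_L)$ where $\mu_L$ is their mediant, while part $(1)$ applied to the pair $(u_1/v_1,\mu)$ gives $h(\mu_L)>\max\{h(u_1/v_1),h(\mu)\}\ge h(\mu)$; hence $h(a/q)>h(\mu)$. This is exactly where part $(1)$ gets bootstrapped into the full minimality statement.

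To finish $(2)$ I would combine this with the monotonicity $\SF_Q\subseteq\SF_{Q'}$ for $Q\le Q'$ (immediate from the defining inequality $h(a/q)\le Q$) and the standard fact that every rational of the open interval $(a_1/q_1,a_2/q_2)$ lies in the Stern--Brocot subtree of those two neighbours: then $a_\mystar/q_\mystar$ is the unique rational of that interval of least height, namely $Q_\mystar$. Consequently, for $Q_0\le Q<Q_\mystar$ both $a_1/q_1$ and $a_2/q_2$ lie in $\SF_Q$ (their heights $Q_1,Q_2$ are $\le Q_0$) and no rational between them does, so they remain consecutive --- the case $Q=Q_0$ being exactly the assumed consecutiveness in $\SF_{Q_0}$ --- whereas at $Q=Q_\mystar$ the single new rational $a_\mystar/q_\mystar$ falls in the interval and all others have height $>Q_\mystar$, so $a_1/q_1<a_\mystar/q_\mystar<a_2/q_2$ become consecutive in $\SF_{Q_\mystar}$. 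I expect the one genuine obstacle to be the non-monotonicity of $h$ in the denominator: the mediant does have the smallest denominator among the rationals strictly between $a_1/q_1$ and $a_2/q_2$, but that alone does not make its height smallest, which is precisely why part $(1)$ must be run through the Stern--Brocot recursion of the preceding paragraph rather than being applied once.
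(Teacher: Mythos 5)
Your argument is correct, and there is nothing internal to compare it against: this paper does not prove Lemma~\ref{L10} but imports it verbatim as Lemma~A.1 of \cite{ABCZ2025}. Your proof of part $(1)$ is consistent with what the paper does quote: from $(a_1+a_2)q_1=1+a_1(q_1+q_2)$ the inverse of $a_1+a_2$ modulo $q_1+q_2$ is $q_1$, so $Q_\mystar=2q_1+q_2+a_1+a_2$, which is exactly Lemma~\ref{L12}$\,$(iii) after substituting $a_1=q_1-\vq_2$ and $a_2=\vvq_1$ from Lemma~\ref{L11}$\,$(ii); the comparisons $Q_\mystar-Q_1=q_1+q_2+a_2-\va_1>0$ (since $\va_1<q_1$) and $Q_\mystar-Q_2=2q_1+a_1-\vva_2\ge q_1+a_1>0$ (since $\vva_2\le q_1$) are sound. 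For part $(2)$, your Stern--Brocot bootstrap is the right mechanism and is complete as sketched: any rational strictly between Farey neighbours has denominator at least the sum of the endpoint denominators, so the recursion terminates, and applying part $(1)$ at each mediant split shows the mediant is the \emph{unique} minimizer of $h$ strictly between the endpoints; combined with the monotonicity of $Q\mapsto\SF_Q$ this yields both assertions of $(2)$ (in fact it shows the consecutiveness hypothesis in $\SF_{Q_0}$ is automatic once both endpoints lie in $\SF_{Q_0}$). The only loose end is the one you flag yourself, namely the convention for $h$ in the degenerate case $a_1=0$, $q_1=1$; this affects only part $(1)$ at the boundary and is a matter of the convention adopted in \cite{ABCZ2025}, not of your argument, since in part $(2)$ membership in $\SF_{Q_0}$ forces $a_1\ge 1$ and the inequalities you actually invoke in the induction never require evaluating $h(0/1)$.
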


Therefore, 
\begin{equation}\label{SQcharacterization}
\begin{split}
a_1/q_1 <a_2/q_2 \text{ are consecutive in $\SF_Q$ } &
\Longleftrightarrow \  a_1/q_1,a_2/q_2 \in \SF_Q \text{ and } a_\mystar/q_\mystar \notin \SF_Q \\
& \Longleftrightarrow \ \max\{ h(a_1/q_1),h(a_2/q_2)\} \le Q < h(a_\mystar/q_\mystar) .
\end{split}
\end{equation}

Recall also the following:
\begin{lemma}[Lemma 5 in \cite{ABCZ2025}]\label{L11}
Suppose that $a_1/q_1<a_2/q_2$, $0<a_i<q_i$,  and 
$a_2 q_1 -a_1 q_2 =1$.
 Denote by $\bar{n}$ the multiplicative inverse of $n \pmod{q_1}$ in~$[1,q_1)$ and by $\bar{\bar{n}}$ the multiplicative inverse of 
$n\pmod{q_2}$ in~$[1,q_2)$. Then 
\begin{itemize}
 \item[$\rm (i)$]
 $\displaystyle \va_1 =\bigg( 1+\bigg\lfloor \frac{q_2}{q_1}\bigg\rfloor \bigg) q_1 -q_2 ,
 \quad \vva_2 =q_1 -\bigg\lfloor \frac{q_1}{q_2}\bigg\rfloor q_2 ,$

\medskip
 \item[$\rm (ii)$] $\displaystyle a_1 =q_1 -\vq_2,\quad 
 a_2 =\vvq_1 ,\quad a_1 =\frac{q_1 \vvq_1 -1}{q_2} ,
 \quad a_2 =q_2 -\frac{q_2 \vq_2 -1}{q_1}.$

\medskip
 \item[$\rm (iii)$]
$\displaystyle h(a_1/q_1)  =\bigg( 2+\bigg\lfloor \frac{q_2}{q_1}\bigg\rfloor \bigg) 
q_1 -q_2+\frac{q_1 \vvq_1 -1}{q_2} ,$

\medskip
\item[$\rm (iv)$] 
$\displaystyle h(a_2/q_2)= q_1 +
        \bigg( 2-\bigg\lfloor \frac{q_1}{q_2}\bigg\rfloor \bigg) q_2 
        -\frac{q_2 \vq_2 -1}{q_1} .$
\end{itemize}
\end{lemma}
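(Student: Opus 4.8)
The plan is to derive all four identities from the single relation $a_2 q_1 - a_1 q_2 = 1$ together with the size constraints $0 < a_i < q_i$. These constraints force $q_1 \ge 2$ and $q_2 \ge 2$, while the relation forces $\gcd(q_1,q_2) = 1$; in particular $q_1 \nmid q_2$ and $q_2 \nmid q_1$, which is exactly what is needed to locate the canonical representatives in $[1,q_1)$ and $[1,q_2)$.

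I would begin with part (ii). Reducing $a_2 q_1 - a_1 q_2 = 1$ modulo $q_1$ gives $(q_1 - a_1)q_2 \equiv 1 \pmod{q_1}$, and since $0 < a_1 < q_1$ forces $q_1 - a_1 \in [1,q_1)$, this identifies $\vq_2 = q_1 - a_1$, i.e.\ $a_1 = q_1 - \vq_2$. Reducing the same relation modulo $q_2$ gives $a_2 q_1 \equiv 1 \pmod{q_2}$ with $a_2 \in [1,q_2)$, hence $a_2 = \vvq_1$. The remaining two identities in (ii) are then obtained by rearranging the relation: $q_1 \vvq_1 - 1 = a_2 q_1 - 1 = a_1 q_2$ and $q_2 \vq_2 - 1 = q_2(q_1 - a_1) - 1 = q_1 q_2 - (a_2 q_1 - 1) - 1 = q_1(q_2 - a_2)$, after which one divides by $q_2$, respectively $q_1$ (both divisions being exact by construction).

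For part (i), note that $a_1 \equiv -\vq_2 \pmod{q_1}$, so the inverse of $a_1$ modulo $q_1$ is congruent to $-q_2 \pmod{q_1}$; writing $q_2 = \lfloor q_2/q_1\rfloor q_1 + \rho$ with $0 \le \rho < q_1$, one has $\rho \ne 0$ (otherwise $q_1 \mid q_2$), so $(1 + \lfloor q_2/q_1\rfloor)q_1 - q_2 = q_1 - \rho$ lies in $[1,q_1)$ and is the sought representative, giving the stated formula for $\va_1$. Symmetrically, $a_2 = \vvq_1$ shows the inverse of $a_2$ modulo $q_2$ is congruent to $q_1$, and writing $q_1 = \lfloor q_1/q_2\rfloor q_2 + \rho'$ with $0 \le \rho' < q_2$ and $\rho' \ne 0$ (since $q_2 \nmid q_1$) yields $\vva_2 = \rho' = q_1 - \lfloor q_1/q_2\rfloor q_2$. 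Parts (iii) and (iv) are then pure substitution: $h(a_1/q_1) = q_1 + a_1 + \va_1 = (2 + \lfloor q_2/q_1\rfloor)q_1 - q_2 + a_1$, into which one feeds $a_1 = (q_1\vvq_1 - 1)/q_2$ from (ii); and $h(a_2/q_2) = q_2 + a_2 + \vva_2 = q_1 + (1 - \lfloor q_1/q_2\rfloor)q_2 + a_2$, into which one feeds $a_2 = q_2 - (q_2\vq_2 - 1)/q_1$ from (ii).

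The argument carries no difficulty of principle; the only point demanding care — and the one place where a careless reduction would produce a wrong answer — is verifying that each reduced residue genuinely lands in the half-open interval $[1,q)$ rather than merely in $[0,q)$, which is precisely where the hypotheses $0 < a_i < q_i$ (hence $q_i \ge 2$) and coprimality are used.
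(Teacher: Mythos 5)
Your argument is correct, and every identity checks out: reducing $a_2q_1-a_1q_2=1$ modulo $q_1$ and modulo $q_2$ gives $\vq_2=q_1-a_1$ and $\vvq_1=a_2$, the two divisibility identities in (ii) follow by exact rearrangement, (i) follows from $\va_1\equiv -q_2$ and $\vva_2\equiv q_1$ in the appropriate moduli, and (iii)--(iv) are then substitutions into $h(a_1/q_1)=q_1+a_1+\va_1$ and $h(a_2/q_2)=q_2+a_2+\vva_2$. You are also right to flag, and you correctly settle, the only delicate point: that the representatives land in $[1,q_i)$ rather than just $[0,q_i)$, which uses $q_i\ge 2$ (from $0<a_i<q_i$) together with $\gcd(q_1,q_2)=1$, both forced by the hypotheses. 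Note, however, that this paper does not prove the statement at all; it is quoted verbatim as Lemma~5 of \cite{ABCZ2025} and used as a black box (the in-paper computations only begin with Lemma~\ref{L12}, whose proof says parts (i)--(ii) are ``plain consequences'' of the cited lemma). So there is no in-paper proof to compare against; your contribution is a short, self-contained elementary verification of the imported result, which is exactly the kind of argument the original reference relies on, and it could serve as a replacement for the citation if one wanted the paper to be self-contained.
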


The following expressions of $h(a/q)$ will be helpful:
\begin{lemma}\label{L12}
Under the assumptions of the previous lemma, we have

\begin{itemize}
\item[$\rm (i)$]
$\displaystyle h(a_1/q_1)= \bigg( 3+ \bigg\lfloor  \frac{q_2}{q_1} \bigg\rfloor\bigg) q_1 -q_2 -\vq_2  =\bigg( 2+ \bigg\lfloor \frac{q_2}{q_1}\bigg\rfloor\bigg) q_1 -q_2 
+  \frac{q_1 \vvq_1 -1}{q_2}$.

\medskip

\item[$\rm (ii)$]
$\displaystyle h(a_2/q_2)= \bigg( 1-\bigg\lfloor \frac{q_1}{q_2} \bigg\rfloor \bigg)  q_2+q_1 +\vvq_1 
=q_1 +\bigg( 2- \bigg\lfloor \frac{q_1}{q_2} \bigg\rfloor \bigg) q_2 -\frac{q_2\vq_2 -1}{q_1} $.

\medskip
\item[$\rm (iii)$] 
$\displaystyle h(a_\mystar/q_\mystar)=3q_1+q_2+\vvq_1-\vq_2$.
\end{itemize}
\end{lemma}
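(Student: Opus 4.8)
The plan is to derive all three formulas directly from Lemma~\ref{L11}, since Lemma~\ref{L12} is essentially an algebraic reorganization of the expressions already obtained there. For part~(i), I would start from Lemma~\ref{L11}(iii), which gives $h(a_1/q_1)=(2+\lfloor q_2/q_1\rfloor)q_1-q_2+(q_1\vvq_1-1)/q_2$; this is already the second expression claimed. To get the first expression, I use the identity $a_1=q_1-\vq_2$ from Lemma~\ref{L11}(ii) together with the defining relation $h(a_1/q_1)=q_1+a_1+\va_1$ and the formula $\va_1=(1+\lfloor q_2/q_1\rfloor)q_1-q_2$ from Lemma~\ref{L11}(i): substituting gives $h(a_1/q_1)=q_1+(q_1-\vq_2)+(1+\lfloor q_2/q_1\rfloor)q_1-q_2=(3+\lfloor q_2/q_1\rfloor)q_1-q_2-\vq_2$. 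The equality of the two expressions is then forced (or can be checked directly via the relation $q_1\vvq_1-1=q_2 a_1=q_2(q_1-\vq_2)$, i.e.\ $(q_1\vvq_1-1)/q_2=q_1-\vq_2$).

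For part~(ii), I proceed the same way. Lemma~\ref{L11}(iv) already gives $h(a_2/q_2)=q_1+(2-\lfloor q_1/q_2\rfloor)q_2-(q_2\vq_2-1)/q_1$, which is the second claimed expression. For the first, use $a_2=\vvq_1$ from Lemma~\ref{L11}(ii) and $\vva_2=q_1-\lfloor q_1/q_2\rfloor q_2$ from Lemma~\ref{L11}(i), together with $h(a_2/q_2)=q_2+a_2+\vva_2$; this yields $h(a_2/q_2)=q_2+\vvq_1+q_1-\lfloor q_1/q_2\rfloor q_2=(1-\lfloor q_1/q_2\rfloor)q_2+q_1+\vvq_1$. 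Again the two expressions agree because $a_2=\vvq_1$ and $a_2=q_2-(q_2\vq_2-1)/q_1$ (both from Lemma~\ref{L11}(ii)) give $\vvq_1=q_2-(q_2\vq_2-1)/q_1$, so $(2-\lfloor q_1/q_2\rfloor)q_2-(q_2\vq_2-1)/q_1=(1-\lfloor q_1/q_2\rfloor)q_2+\vvq_1$.

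For part~(iii), the point is that the mediant $a_\mystar/q_\mystar=(a_1+a_2)/(q_1+q_2)$ has $q_\mystar=q_1+q_2$ and, by Lemma~\ref{L11}(ii) applied to the consecutive pair, $a_\mystar=a_1+a_2=(q_1-\vq_2)+\vvq_1$. I would compute $h(a_\mystar/q_\mystar)=q_\mystar+a_\mystar+\va_\mystar$, where $\va_\mystar$ is the inverse of $a_\mystar$ modulo $q_\mystar=q_1+q_2$. Since $a_1/q_1<a_\mystar/q_\mystar<a_2/q_2$ with $a_\mystar q_1-a_1 q_\mystar=1$ and $a_2 q_\mystar-a_\mystar q_2=1$, one can apply Lemma~\ref{L11}(i)--(ii) to the pair $a_\mystar/q_\mystar<a_2/q_2$ (whose denominators satisfy $q_2<q_\mystar=q_1+q_2<2q_2$ when $q_1<q_2$, so $\lfloor q_\mystar/q_2\rfloor$ is controlled) to get $\va_\mystar$ in terms of $q_1,q_2$; alternatively, use that $a_\mystar q_1\equiv 1\pmod{q_\mystar}$ so $\va_\mystar\equiv q_1\pmod{q_\mystar}$ and, since $0<q_1<q_1+q_2=q_\mystar$, in fact $\va_\mystar=q_1$ precisely when $q_1$ is coprime to $q_1+q_2$ and lies in $[1,q_\mystar)$ — which it does. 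Then $h(a_\mystar/q_\mystar)=(q_1+q_2)+(q_1-\vq_2+\vvq_1)+q_1=3q_1+q_2+\vvq_1-\vq_2$, as claimed.

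The only mild subtlety — and the step I would be most careful about — is part~(iii), specifically justifying that $\va_\mystar=q_1$ exactly (i.e.\ that the representative in $[1,q_\mystar)$ of the inverse is $q_1$ itself, not $q_1$ shifted by a multiple of $q_\mystar$); this follows immediately from $1\le q_1<q_1+q_2=q_\mystar$ and $\gcd(q_1,q_\mystar)=\gcd(q_1,q_2)=1$, together with $a_\mystar q_1=(a_1+a_2)q_1=a_1 q_1+a_2 q_1=a_1 q_1+(1+a_1 q_2)=a_1(q_1+q_2)+1\equiv 1\pmod{q_\mystar}$. Everything else is bookkeeping with the identities in Lemma~\ref{L11}, so no genuine obstacle arises; the work is purely to present the substitutions cleanly.
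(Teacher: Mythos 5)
Your proposal is correct. Parts (i) and (ii) are handled exactly as in the paper: both are read off from Lemma~\ref{L11} (items (i)--(iv)), and your cross-checks of the two expressions via $a_1=q_1-\vq_2=(q_1\vvq_1-1)/q_2$ and $a_2=\vvq_1=q_2-(q_2\vq_2-1)/q_1$ are just restatements of Lemma~\ref{L11}(ii). For part (iii) you take a slightly different, and if anything more direct, route: you compute $h(a_\mystar/q_\mystar)=q_\mystar+a_\mystar+\va_\mystar$ from the definition, identifying $\va_\mystar=q_1$ via the congruence $a_\mystar q_1=(a_1+a_2)q_1=a_1(q_1+q_2)+1\equiv 1\pmod{q_\mystar}$ together with $1\le q_1<q_\mystar$. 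The paper instead applies its own formula (i) to the unimodular pair $a_\mystar/q_\mystar<a_2/q_2$ (for which $\lfloor q_2/q_\mystar\rfloor=0$), obtaining $h(a_\mystar/q_\mystar)=3q_\mystar-q_2-\vvvq_2$ with $\vvvq_2=q_\mystar-a_\mystar=q_2+\vq_2-\vvq_1$ the inverse of $q_2\pmod{q_\mystar}$. Both arguments are one-line consequences of the same identities; yours avoids introducing the auxiliary inverse $\vvvq_2$ at the cost of verifying the explicit congruence for $\va_\mystar$, which you do correctly, so there is no gap.
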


\begin{proof}
(i) and (ii) are plain consequences of \cite[Lemma~5]{ABCZ2025}. 

For (iii), we employ (i) and  $a_2 q_\mystar -a_\mystar q_2 =1$ to get
\begin{equation*}
h(a_\mystar/q_\mystar) =3q_\mystar -q_2 -\vvvq_2 =3q_1+2q_2 -\vvvq_2=3q_1+q_2+\vvq_1-\vq_2 ,
\end{equation*}
where $\vvvq_2=q_\mystar-a_\mystar= q_2+\vq_2-\vvq_1$  denotes the multiplicative inverse
of $ q_2 \pmod{q_\mystar}$ in $[1,q_\mystar)$.
\end{proof}

Assume now that $\cR \subseteq \mathring{\cV}_2$, so  $x_0+y_0 <1$ , $x_0 >y_0$, $3x_0 +2y_0 >1$ and $1+y_0 -2x_0 >0$.
Here, the integer pairs $(q_1,q_2)\in \scrD_Q \cap Q\cR$  must satisfy the inequalities $q_1+q_2 < Q$, $q_2 < q_1$ and $3q_1+2q_2 >Q> 2q_1-q_2$.
Taking \eqref{SQcharacterization} into account, this is equivalent with counting
integer triples $(q_1,q_2,b)$ such that\footnote{Here, $b$ plays the role of $\vq_2=q_1-a_1$, the  multiplicative inverse of $q_2\pmod{q_1}$ in $[1,q_1)$.}
\begin{equation}\label{ineq2}
\begin{split}
& q_1 \in QI_1,\quad q_2\in QI_2, \quad q_2 < q_1, \quad 0<b <q_1,\quad  q_2 b \equiv 1 \pmod{q_1},  \\
&3q_1-q_2-b \le Q , \quad (\Longleftrightarrow h(a_1/q_1)\le Q)\\
& q_1+\bigg( 2-\bigg\lfloor \frac{q_1}{q_2} \bigg\rfloor\bigg) q_2 -\frac{q_2 b -1}{q_1} \le Q ,\quad (\Longleftrightarrow h(a_2/q_2) \le Q)  \\
& 3q_1+2q_2 - b -\frac{q_2  b -1}{q_1} >Q. \quad (\Longleftrightarrow h(a_*/q_*) >Q)
\end{split}
\end{equation}

Consider
\begin{align*}
F_Q (u,v) & := \max \bigg\{ 3u-v -Q, \frac{1+u(u+(2-\lfloor u/v \rfloor)  v-Q)}{v},0 \bigg\} , \\
\widetilde{F}_Q (u,v) & := \max \bigg\{ 3u-v -Q,  \frac{u(u+(2-\lfloor u/v \rfloor\big)  v-Q)}{v},0 \bigg\} , \\
 G_Q(u,v) & :=\min \bigg\{ \frac{1+u(3u +2v -Q)}{u+v}, u \bigg\} ,\\
\widetilde{G}_Q (u,v)  & :=\min \bigg\{ \frac{u(3u +2v -Q)}{u+v}, u \bigg\} ,\quad
 (u,v)\in Q\cR \subseteq Q\mathring{\cV}_2.
\end{align*} 
By a plain check, we see that 
\begin{equation}\label{ineq3}
\widetilde{F}_Q(u,v)  \le \max \bigg\{ 3u-v -Q,\frac{u(3v -Q)}{v},0\bigg\} \le \widetilde{G}_Q(u,v) \le u,\quad \forall (u,v)\in Q\cR \subseteq Q\mathring{\cV}_2 .
\end{equation}
The inclusion $\cR \subseteq \mathring{\cV}_2$ entails  $v\le u$. On the other hand, $\lfloor u/v\rfloor =\lfloor (u/Q)/(v/Q)\rfloor$ takes only
finitely many values (depending on $\cR$ only) whenever $(u,v)\in Q\cR$, and so
\begin{equation*}
F_Q = \widetilde{F}_Q + O_\cR (1) \quad \text{and} \quad G_Q =\widetilde{G}_Q + O(1),
\end{equation*}
uniformly on $Q\cR$. The inequalities in \eqref{ineq2} are equivalent to 
\begin{equation*}
q_2 < q_1 \quad \text{and} \quad  F_Q (q_1,q_2)  \le b < G_Q(q_1,q_2),
\end{equation*} 
hence the number of integer triples $(q_1,q_2,b)$ that satisfy \eqref{ineq2} is given by
\begin{equation*}
S_2 (Q) = \sum\limits_{q_1 \in QI_1}   \sum\limits_{\substack{q_2 \in QI_2 \\ F_Q (q_1,q_2)\le b \le G_Q(q_1,q_2) \\  q_2 b \equiv 1 \pmod{q_1}}} 1
= \sum\limits_{q_1 \in QI_1}  \sum\limits_{\substack{q_2 \in QI_2 \\  \widetilde{F}_Q (q_1,q_2)\le b \le \widetilde{G}_Q(q_1,q_2) \\  q_2 b \equiv 1 \pmod{q_1}}} 1 
+O_\cR (Q).
\end{equation*}
Consider also the sets
\begin{equation*}
\Omega_{Q,u}^{(2)} := \{ (v,w): v\in QI_2, \widetilde{F}_Q (u,v)   \le w \le \widetilde{G}_Q(u,v) \},\quad u \in QI_1,
\end{equation*}
with
\begin{equation*}
\operatorname{Area} (\Omega_{Q,u}^{(2)}) =\int_{QI_2} \big(\widetilde{G}_Q (u,v)-\widetilde{F}_Q(u,v)\big) \, dv.
\end{equation*}
We first apply  \cite[Lemma~3.1]{Boca2007} to the sets $\Omega^{(2)}_{Q,q_1}$, $q_1\in QI_1$, taking $T=\lfloor Q^{1/4}\rfloor$. 
The range of $v$ satisfies $[\alpha,\beta] \subseteq [0,q_1]$. We also have
$V_\alpha^\beta \widetilde{F}_Q(q_1,\cdot) \ll_\cR q_1$, 
$V_\alpha^\beta \widetilde{G}_Q(q_1,\cdot) \ll q_1$, with $\| \widetilde{F} (q_1,\cdot)\|_\infty \le \| \widetilde{G}_Q (q_1,\cdot)\|_\infty \le q_1$,
leading to 
\begin{equation*}
S_2 (Q) = 
\sum\limits_{q_1 \in QI_1} \bigg( \frac{\varphi(q_1)}{q_1^2} \operatorname{Area}(\Omega_{Q,q_1}^{(2)})+O_{\cR,\varepsilon} (\cE_{q_1,\varepsilon}) \bigg),
\end{equation*}
with error terms summing to
\begin{equation*}
\sum\limits_{q_1\in QI_1} \cE_{q_1,\varepsilon} \ll_\cR \sum\limits_{q_1\in QI_1} \bigg(
\frac{q_1}{Q^{1/4}q_1} \, q_1 +Q^{1/4} q_1^{1/2+\varepsilon}\bigg) \ll Q^{7/4+\varepsilon} .   
\end{equation*}
Applying next  \cite[Lemmma~2.3]{BCZ2000} to  $V(u):=\operatorname{Area} (\Omega^{(2)}_{Q,u})/u$,
which is piecewise $C^1$ on $QI_1$ with $\| V\|_\infty\le Qu$, we infer
\begin{align*}
S_2 (Q) & =\frac{1}{\zeta(2)} \int_{QI_1} V(u)\, du +O_{\cR,\varepsilon} (Q^{7/4+\varepsilon}) \\
& =\frac{1}{\zeta(2)} \int_{QI_1} du \int_{QI_2} dv \, \frac{\widetilde{G}_Q(u,v)-\widetilde{F}_Q(u,v)}{u} +O_{\cR,\varepsilon} (Q^{7/4+\varepsilon}) \\
& = \frac{Q^2}{\zeta(2)} \iint_\cR \frac{\widetilde{G}_Q (Qx,Qy)-\widetilde{F}_Q(Qx,Qy)}{Qx} \, dx dy +O_{\cR,\varepsilon} (Q^{7/4+\varepsilon}) \\ 
& =\frac{Q^2}{\zeta(2)} \iint_{\mathcal{R}} H_2(x,y) dx dy+O_{\cR,\varepsilon} (Q^{7/4+\varepsilon}),
\end{align*}
where
\begin{equation}\label{H2}
H_2 (x,y)= \min\bigg\{ \frac{3x+2y-1}{x+y},1\bigg\} -\max \bigg\{ \frac{3x-y-1}{x},\frac{x+(2-\lfloor x/y\rfloor) y-1}{y},0\bigg\} ,
\end{equation}
with $H_2 (x,y) >0$ on $\mathring{\cV}_2$.

Assume next that $\cR \subseteq \mathring{\cV}_3$, so $x_0+y_0 <1$,  $x_0 <y_0$ and $3x_0+2y_0 >1$. 
Here, the integer pairs $(q_1,q_2)\in \scrD_Q \cap Q\cR$  must satisfy the inequalities $q_1+q_2 < Q$, $q_1 <q_2$ and $3q_1+2q_2 >Q$.
Taking~\eqref{SQcharacterization} into account, this is equivalent with counting
integer triples $(q_1,q_2,a)$ such that \footnote{Here, $a$ plays the role of $\vvq_1$, the multiplicative inverse of $q_1 \pmod{q_2}$ in $[1,q_2)$.}  
\begin{equation}\label{ineq1}
\begin{split}
& q_1 \in QI_1,\quad q_2 \in QI_2, \quad q_1 < q_2 , \quad 0<a < q_2, \quad q_1 a \equiv 1 \pmod{q_2} , \\
& \bigg( 2+\bigg\lfloor \frac{q_2}{q_1}\bigg\rfloor \bigg) q_1 -q_2+\frac{q_1 a -1}{q_2} \le Q ,
\quad (\Longleftrightarrow h(a_1/q_1)\le Q)\\
& q_1+q_2+a \le Q ,\quad (\Longleftrightarrow h(a_2/q_2) \le Q)  \\
& 2q_1 +q_2 +a+\frac{q_1 a -1}{q_2} >Q. \quad (\Longleftrightarrow h(a_*/q_*) >Q)
\end{split}
\end{equation}

Consider
\begin{align*}
F_Q (u,v) & :=\max\bigg\{ \frac{1+v (Q-2u-v)}{u+v},0\bigg\} ,\quad
\widetilde{F}_Q (u,v)  :=\max\bigg\{ \frac{v (Q-2u-v)}{u+v},0\bigg\} , \\ 
G_Q(u,v)  & : =\min \bigg\{ \frac{1+v(Q+v-(2+\lfloor v/u\rfloor)u)}{u}, Q-u-v,v\bigg\} ,\\
\widetilde{G}_Q(u,v) & := \min \bigg\{ \frac{v(Q+v-(2+\lfloor v/u\rfloor)u)}{u}, Q-u-v,v\bigg\} , \quad
(u,v)\in Q\cR \subseteq Q \mathring{\cV}_3 .
\end{align*}
By a plain check, we see that
\begin{align*}
& \widetilde{F}_Q(u,v) \le \frac{v (Q-2u)}{u} \le  \frac{v ( Q+v -(2+\lfloor v/u\rfloor) u )}{u}   ,\\
& \widetilde{F}_Q (u,v) \le Q-u -v \quad \text{and} \quad
 \widetilde{F}_Q (u,v) \le v,
\end{align*}
where for the latter we employed the inequality $3x+2y-1 >0$ on $\mathring{\cV}_3$. Thus,
$\widetilde{F}_Q(u,v)\le \widetilde{G}_Q(u,v)\le v$ for all $(u,v)\in Q\cR$. We also notice that $u\le v$ for all $(u,v)\in Q\cR$, 
and also that  $\lfloor v/u\rfloor$ takes only a finite nuumber of values, which depends on $\cR$ only.
The inequalities in \eqref{ineq1} are equivalent to 
\begin{equation}\label{ineq4}
q_1 < q_2 \quad \text{and}\quad F_Q(q_1,q_2) < a\le G_Q(q_1,q_2) .
\end{equation}
Employing inequalities \eqref{ineq4} and the same technique as in~$\cR \subseteq \mathring{\cV}_2$
for the sets
\begin{equation*}
\Omega^{(3)}_{Q,v} :=\{ (u,w): u\in QI_1, \widetilde{F}_Q(u,v)\le w\le \widetilde{G}_Q(u,v)\},\quad v\in QI_2,
\end{equation*}
the number of triples $(q_1,q_2,a)$ as in \eqref{ineq1} is seen to be \mbox{given by}
\begin{align*}
S_3 (Q) & =\sum\limits_{q_2 \in QI_2}  \sum\limits_{\substack{q_1 \in QI_1 \\ F_Q (q_1,q_2) \le a \le G_Q(q_1,q_2) \\ q_1 a \equiv 1 \pmod{q_2}}} 1
=\sum\limits_{q_2 \in QI_2}  \sum\limits_{\substack{q_1 \in QI_1 \\   \widetilde{F}_Q (q_1,q_2) \le a \le \widetilde{G}_Q(q_1,q_2) \\ q_1 a \equiv 1 \pmod{q_2}}} 1 
+O_\cR (Q)  \\ &
=\sum\limits_{q_2 \in QI_2}  \frac{\varphi(q_2)}{q_2^2} \operatorname{Area} (\Omega^{(3)}_{Q,q_2}) 
+O_{\cR,\varepsilon} (Q^{7/4+\varepsilon}) \\
& =\frac{1}{\zeta(2)} \int_{QI_2} dv \int_{QI_1} du\, \frac{\widetilde{G}_Q(u,v)-\widetilde{F}_Q(u,v)}{v}
+O_{\cR,\varepsilon} (Q^{7/4+\varepsilon})         \\
& = \frac{Q^2}{\zeta(2)} \iint_\cR \frac{\widetilde{G}_Q(Qx,Qy)-\widetilde{F}_Q(Qx,Qy)}{Qy}\, dx dy +O_{\cR,\varepsilon} (Q^{7/4+\varepsilon}) \\
&   = \frac{Q^2}{\zeta(2)} \iint_\cR H_3(x,y)\, dx dy +O_{\cR,\varepsilon}  (Q^{7/4+\varepsilon}) ,
\end{align*}
where
\begin{equation}\label{H3}
H_3 (x,y)= \min\bigg\{ \frac{1+y-(2+\lfloor y/x\rfloor)x}{x}, \frac{1-x-y}{y},1\bigg\} - \max\bigg\{ \frac{1-2x-y}{x+y},0\bigg\} ,
\end{equation}
with $H_3 (x,y) >0$ on $\mathring{\cV}_3$. This completes the proof of Theorem \ref{Thm2}.

\begin{remark}
Employing \eqref{SQcharacterization}, \eqref{ineq1} and \eqref{ineq2} to characterize consecutive
elements in $\SF_Q$, one can analyze in a different way than in \cite{ABCZ2025} the gap distribution
in $\SF_Q$.
\end{remark}

\begin{remark}
In the case $\cR \subseteq \mathring{\cV}_1$ the bound $O_\varepsilon (Q^{7/4+\varepsilon})$ can be improved to $O_\varepsilon (Q^{3/2+\varepsilon})$ by employing
\cite[Lemma~2]{Ust2013} (see also \cite[Lemma~20]{Sis2022}) instead of \cite[Lemma~3.1]{Boca2007}.
\end{remark}

\section{\texorpdfstring{A partition of the region $\cV$}{A partition of the region cV}}\label{partition}
Here, we assume that the fractions 
$a_1/q_1 < \cdots < a_{r+1}/q_{r+1}$ are consecutive elements in $\cF_Q$ with
$a_1/q_1 , a_{r+1} /q_{r+1} \in \SF_Q$ and $a_2/q_2 ,\ldots ,a_r/q_r \notin \SF_Q$.

$\bullet$ When $r=1$, $a_1/q_1 <a_2/q_2$ are consecutive in both $\SF_Q$ and $\cF_Q$. 
Lemma \ref{L4} provides
\begin{equation*}
\bigg( \frac{q_1}{Q}, \frac{q_2}{Q} \bigg) \in  \cW_1:=\cV_1 .
\end{equation*}

$\bullet$ When $r=2$, Lemma~\ref{L5} provides $K^F_1 (\nu_2)=\nu_2=1$, so $q_2=q_1+q_3$. We gather
$q_1+q_3 \le Q$, $2q_1+q_3 = q_1+q_2 >Q$, $ q_1+ 2q_3 = q_2+q_3 >Q$. Here, $q_1 <q_2$, so 
we also have $2q_1 -q_3 =3q_1 -q_2 \le (2+[q_2/q_1]) q_1 -q_2 \le Q$. These inequalities translate into
\begin{equation*}
\bigg( \frac{q_1}{Q},\frac{q_3}{Q}\bigg) \in  \cW_2:=\bigg\{ (x,y)\in [0,1]^2 : \max\bigg\{ 1-2x,\frac{1-x}{2} ,2x-1\bigg\} \le y \le 1-x\bigg\}  ,
\end{equation*}
with $\cW_2 \subseteq [0,2/3] \times [1/5,1]$.

$\bullet$ When $r=3$, Lemma \ref{L5} provides $K^F_2 (\nu_2,\nu_3)=\nu_2 \nu_3 -1 =1$, so 
$(\nu_2,\nu_3) \in \{ (2,1),(1,2)\}$.

{\bf Case 3.1.} $(\nu_2,\nu_3)=(2,1)$, so $q_3=2q_2 -q_1$, $q_4=q_3-q_2=q_2-q_1$, providing
$q_2=q_1+q_4$, $q_3 =q_1 +2q_4 \le Q$, $q_1+q_2 =2q_1 +q_4 >Q$, and $q_3+q_4 =q_1+3q_4 >Q$.
We also have $q_1 < q_2$ and $2q_1 -q_4 =3q_1 -q_2 \le (2+\lfloor q_2/q_1\rfloor) q_1 -q_2 \le Q$, thus
\begin{equation*}
\bigg( \frac{q_1}{Q},\frac{q_4}{Q}\bigg) \in \cW_{3,2} :=\bigg\{ (x,y) \in [0,1]^2: \max\bigg\{ 1-2x,\frac{1-x}{3} ,2x-1\bigg\} \le y \le \frac{1-x}{2} \bigg\}  ,
\end{equation*}
with $\cW_{3,2} \subseteq [1/3,3/5] \times [1/7,1/3]$.

{\bf Case 3.2.} $(\nu_2,\nu_3)=(1,2)$, so $q_3=q_2-q_1$, $q_4=2q_3-q_2=q_2 -2q_1$, providing
$q_2=2q_1+q_4 \le Q$, $q_3=q_1+q_4$, $q_1+q_2 = 3q_1+q_4> Q$, 
$q_3 +q_4 =q_1 +2q_4 >Q$, thus
\begin{equation*}
\bigg( \frac{q_1}{Q},\frac{q_4}{Q} \bigg)  \in \cW_{3,3} :=\bigg\{ (x,y)\in [0,1]^2 : \max \bigg\{ 1-3x, \frac{1-x}{2} \bigg\} \le y \le 1-2x \bigg\} ,
\end{equation*}
with $\cW_{3,3} \subseteq [0,1/3] \times [1/3,1]$.

$\bullet$ When $r=4$, Lemma \ref{L5} provides $K^F_3 (\nu_2,\nu_3,\nu_4)=\nu_2 \nu_3 \nu_4 -\nu_2-\nu_4 =1$, with five 
positive integer solutions: $(1,2,2)$, $(1,3,1)$, $(2,2,1)$, $(2,1,3)$, $(3,1,2)$. The solution $(2,1,3)$ cannot occur here because 
$q_3=2q_2-q_1$, $q_4=q_3-q_2=q_2-q_1$, $q_5=3q_4-q_3 =q_2-2q_1$ would lead to
$Q\ge q_3>2q_2-3q_1=q_4+q_5 >Q$, a contradiction. The solution $(3,1,2)$ cannot occur either because 
$q_3=3q_2-q_1$, $q_4=q_3-q_2=2q_2-q_1$, $q_5=2q_4-q_3 = q_2-q_1$ would lead to 
$Q\ge q_3 > 3q_2 -2q_1=q_4+q_5 >Q$, a contradiction.
 We are  left with three cases:

{\bf Case 4.1.} $(\nu_2,\nu_3,\nu_4)=(2,2,1)$, so $q_3=2q_2-q_1$, $q_4=2q_3-q_2=3q_2-2q_1$, $q_5=q_4-q_3=q_2-2q_1$, providing
$q_2=q_1+q_5$, $q_3=q_1+2q_5$, $q_4=q_1+3q_5 \le Q$, and thus 
$q_1+q_2 =2q_1+q_5 >Q$ and $q_4+q_5 =q_1+4q_5 >Q$. Again, the inequality $q_5 \ge 2q_1 -Q$ follows for free since 
$q_1 <q_2$ and $2q_1-q_5 =3q_1 -q_2 \le (2+\lfloor q_2/q_1\rfloor)q_1 -q_2 \le Q$. Thus
\begin{equation*}
\bigg( \frac{q_1}{Q}, \frac{q_5}{Q}\bigg)  \in \cW_{4,2}:=\bigg\{ (x,y)\in [0,1]^2 : \max 
\bigg\{ 1-2x, \frac{1-x}{4},2x-1\bigg\} \le y \le \frac{1-x}{3}  \bigg\} ,
\end{equation*}
with $\cW_{4,2} \subseteq  [2/5,4/7] \times [ 1/9,1/5]$.

{\bf Case 4.2.} $(\nu_2,\nu_3,\nu_4)=(1,3,1)$, so $q_3=q_2-q_1$, $q_4=3q_3 -q_2 =2q_2 -3q_1$, $q_5=q_4-q_3=q_2-2q_1$, providing 
$q_2=2q_1+q_5 \le Q$, $q_3=q_1+q_5$, $q_4=q_1+2q_5 \le Q$. Thus
$q_1+q_2 =3q_1+q_5 >Q$ and $q_4+q_5 =q_1+3q_5 >Q$, leading to 
\begin{equation*}
\bigg( \frac{q_1}{Q},\frac{q_5}{Q} \bigg) \in \cW_{4,3}:=\bigg\{ (x,y)\in [0,1]^2: 
\max\bigg\{ 1-3x,\frac{1-x}{3} \bigg\} \le y \le \min \bigg\{ \frac{1-x}{2}, 1-2x\bigg\} \bigg\} ,
\end{equation*}
with $\cW_{4,3} \subseteq [1/5,2/5]\times [1/5,2/5]$.

{\bf Case 4.3.} $(\nu_2,\nu_3,\nu_4)=(1,2,2)$, so $q_3=q_2-q_1$, $q_4=2q_3-q_2 =q_2-2q_1$, $q_5=2q_4-q_3=q_2-3q_1$, providing
$q_2=3q_1+q_5\le Q$, $q_3=2q_1+q_5$, $q_4=q_1+q_5$. Thus $q_1+q_2 =3q_1+q_5 >Q$ and $q_4+q_5 =q_1 +2q_5 >Q$, leading to 
\begin{equation*}
\bigg( \frac{q_1}{Q},\frac{q_5}{Q}\bigg) \in \cW_{4,4}:=\bigg\{ (x,y)\in [0,1]^2:
\max \bigg\{ 1-4x,\frac{1-x}{2} \bigg\} \le y \le 1-3x\bigg\} ,
\end{equation*}
with  $\cW_{4,4}\subseteq [0,1/5] \times [2/5,1]$.

$\bullet$ When $r=5$, Lemma \ref{L5} provides $K^F_4 (\nu_2,\nu_3,\nu_4,\nu_5)=\nu_2\nu_3\nu_4 \nu_5 -\nu_2 \nu_3 -\nu_2\nu_5 - \nu_4\nu_5 +1=1$, with
twelve positive solutions:$(1,3,1,3)$,  $(1,4,1,2)$, $(2,1,3,2)$, $(2,1,4,1)$, $(2,2,1,4)$, $(3,1,2,3)$, $(3,1,3,1)$, 
$(3,2,1,3)$, $(2,2,2,1)$, $(1,3,2,1)$, $(1,2,3,1)$, $(1,2,2,2)$. The first eight solutions are not admissible because 

$\bullet$ $(1,3,1,3)$ would yield $Q \ge q_4=2q_2-3q_1 >2q_2-5q_1 =    q_5+q_6 > Q$, a contradiction.

$\bullet$ $(1,4,1,2)$ would yield $Q\ge q_4=3q_2-4q_1 > 3q_2-5q_1 =q_5 +q_6  >Q$, a contradiction.

$\bullet$ $(2,1,3,2)$ would yield $Q\ge q_3=2q_2-q_1 >  2q_2 -3q_1=q_4+q_5 >Q$, a contradiction.

$\bullet$ $(2,1,4,1)$ would yield $q_3=2q_2-q_1$, $q_4=q_3-q_2 =q_2-q_1$,
$q_5=2q_2-3q_1$, and $q_6=q_5-q_4 =q_2-2q_1 >0$, so $q_2 >2q_1$, which gives $\nu_2=1$, a contradiction.

$\bullet$ $(2,2,1,4)$ would yield $Q\ge q_3=2q_2-q_1 > 2q_2-3q_1 =q_5+q_6 >Q$, a contradiction.

$\bullet$ $(3,1,2,3)$  would yield  $Q\ge q_4=2q_2-q_1 >2q_2-3q_1=q_5+q_2 >Q$, a contradiction.

$\bullet$ $(3,1,3,1)$ would yield $q_2 >q_1$, so $\nu_2 \in \{ 1,2\}$, a contradiction.

$\bullet$ $(3,2,1,3)$ would yield $Q\ge q_3 =3q_2 -q_1 > 3q_2-2q_1 =q_5+q_6 >Q$, a contradiction.

We are left with four cases:

{\bf Case 5.1.} $(\nu_2,\nu_3,\nu_4,\nu_5)=(2,2,2,1)$, so $q_3=2q_2-q_1$, $q-4=2q_3-q_2=3q_2-2q_1$ $q_5=2q_4-q_3$, $q_6=q_5-q_4=q_2-q_1$,
providing $q_2=q_1+q_6$, $q_3 =q_1+2q_6$, $q_4=q_1+3q_6$, $q_5=q_1+4q_6 \le Q$. Thus
$q_1 +q_2 =2q_1 +q_6 >Q$ and $q_1 +5q_6=q_5+q_6 >Q$, leading to
\begin{equation*}
\bigg( \frac{q_1}{Q},\frac{q_6}{Q}\bigg) \in \cW_{5,2}:= \bigg\{ (x,y)\in [0,1]^2 :
\max\bigg\{ 1-2x, \frac{1-x}{5}, 2x-1\bigg\} \le y \le \frac{1-x}{4}\bigg\} ,
\end{equation*}
with $\cW_{5,2} \subseteq [3/7,6/11] \times [1/11,1/7]$.

{\bf Case 5.2.} $(\nu_2,\nu_3,\nu_4,\nu_5)=(1,3,2,1)$, so $q_3=q_2-q_1$, $q_4=3q_3-q_2=2q_2-3q_1$, 
$q_5=2q_4-q_3=3q_2-5q_1$, $q_6=q_5-q_4=q_2-2q_1$, providing $q_2=2q_1+q_6 \le Q$, $q_3 =q_1+q_6$, 
$q_4=q_1+2q_6$, $q_5=q_1+3q_6 \le Q$. Thus $q_1+q_2=2q_1+q_6 >Q$ and $q_5+q_6 =q_1+4q_6 >Q$, leading to
\begin{equation*}
\bigg( \frac{q_1}{Q},\frac{q_6}{Q}\bigg) \in \cW_{5,3}:=\bigg\{ (x,y)\in [0,1]^2 :
\max\bigg\{ 1-3x,\frac{1-x}{4}\bigg\} \le y\le \min \bigg\{ \frac{1-x}{3}, 1-2x\bigg\} \bigg\} ,
\end{equation*}
with $\cW_{5,3} \subseteq [1/4,3/7] \times [1/7,1/4]$.

{\bf Case 5.3.} $(\nu_2,\nu_3,\nu_4,\nu_5) =(1,2,3,1)$, so $q_3=q_2-q_1$, $q_4=2q_3-q_2=q_2-2q_1$,
$q_5=3q_4-q_3=2q_2-5q_1$, $q_6=q_5-q_4=q_2-3q_1$, providing $q_2=3q_1+q_6 \le Q$, $q_3=2q_1+q_6$, 
$q_4=q_1+q_6$, $q_5=q_1+2q_6 \le Q$. Thus $q_1+q_2=4q_1+q_6 >Q$ and $q_5+q_6 =q_1+3q_6 >Q$, leading to
\begin{equation*}
\bigg( \frac{q_1}{Q},\frac{q_6}{Q}\bigg) \in \cW_{5,4}:=\bigg\{ (x,y)\in [0,1]^2 :
\max \bigg\{ 1-4x, \frac{1-x}{3} \bigg\} \le y \le \min \bigg\{ \frac{1-x}{2}, 1-3x \bigg\} \bigg\} ,
\end{equation*}
with $\cW_{5,4} \subseteq [1/7,1/4] \times [1/4,3/7]$.

{\bf Case 5.4.}  $(\nu_2,\nu_3,\nu_4,\nu_5)=(1,2,2,2)$, so $q_3=q_2-q_1$, $q_4=2q_3-q_2=q_2-2q_1$, $q_5=2q_4-q_3=q_2-3q_1$.
$q_6=2q_5-q_4=q_2-4q_1$, providing $q_2=4q_1+q_6 \le Q$, $q_3=3q_1+q_6$, $q_4=2q_1+q_6$, 
$q_5=q_1+q_6$. Thus $q_1+q_2 =5q_1+q_6 >Q$ and $q_5+q_6 =q_1+2q_6 >Q$, leading to
\begin{equation*}
\bigg( \frac{q_1}{Q},\frac{q_6}{Q}\bigg) \in \cW_{5,5}:=\bigg\{ (x,y)\in [0,1]^2 :
\max \bigg\{ 1-5x,\frac{1-x}{2} \bigg\} \le y\le 1-4x \bigg\} ,
\end{equation*}
with $\cW_{5,5} \subseteq [0,1/7] \times [3/7,1]$.

Next, we show an example where the boundary line $3x+2y=1$ of $\cV$
occurs as a side of a region $\cW_{r,i}$.
The tuple $(q_1,\ldots,q_{10})=(27,95, 68, 41, 96, 55, 69,83,97,14)$ represents consecutive elements in $\cF_{100}$ 
with $q_1=27$ and $q_{10}=14$ consecutive in $\SF_{100}$ and mediant $q_1+q_{10}=41=q_4$.
The equalities $q_2= 3q_1+q_{10}$, $q_3=2q_1+q_{10}$, $q_4=q_1+q_{10}$, $q_5=2q_1+3q_{10}$,  $q_6=q_1+2q_{10}$, $q_7=q_1+3q_{10}$,
$q_8 =q_1+4q_{10}$, $q_9=q_1+5q_{10}$ show that the region $Q \cW_{9,4}$ is defined by the inequalities 
$\max\{ 3q_1+q_{10}, 2q_1+3q_{10} , q_1 +5q_{10}  \} \le Q < \min\{ q_1+q_2 =4q_1+q_{10}, q_3+q_4=3q_1+2q_{10}, 
q_6+q_7=2q_1+5q_{10}, q_9+q_{10}=q_1+6q_{10}\}$, and thus
\begin{equation*}
\cW_{9,4}=\{ h_1(x) \le y \le h_2(x) \} ,
\end{equation*}
with 
\begin{equation*}
\begin{split}
& h_1(x)=  \max\bigg\{ 1-4x,\frac{1-3x}{2},\frac{1-2x}{5} ,\frac{1-x}{6}\bigg\}
=\begin{cases} 1-4x & \mbox{\rm if $0\le x\le 1/5$} \\
(1-3x)/2 & \mbox{\rm if $1/5 \le x \le 1/4$} \\
(1-x)/6 & \mbox{\rm if $1/4 \le x \le 1/2$}
\end{cases} ,  \\
& h_2(x) =  \min \bigg\{ 1-3x,\frac{1-2x}{3} ,\frac{1-x}{5}\bigg\} =\begin{cases}
(1-x)/5 & \mbox{\rm if $0\le x \le 2/7$} \\
1-3x & \mbox{\rm if $2/7 \le x \le 1/2$}
\end{cases}  .
\end{split}
\end{equation*}
We have $\cW_{9,4} \subseteq [3/13,5/17] \times [  2/17  ,2/13]$ and the line $3x+2y=1$ is a side of $\cW_{9,4}$ because 
\begin{equation*}
\cW_{9,4} \cap \bigg( \bigg[ \frac{1}{5},\frac{1}{4} \bigg]   \times [0,1]\bigg) 
=\bigg\{ (x,y) : \frac{1}{5} \le x \le \frac{1}{4}, \frac{1-3x}{2} \le y \le \frac{1-x}{5}\bigg\}.
\end{equation*}
The region $\cW_{9,4}$ is  illustrated in the right-hand picture from Figure \ref{FigureVandWs}.

\section{\texorpdfstring{An index identity and the $\Phi$ function}{An index identity and the Phi function}}\label{Sect5}

The proof of Proposition \ref{Prop3} follows closely the idea of \cite[Theorem 1]{HS2003} and 
proceeds by induction on $Q$.
As $a_{i+1} q_i -a_i q_{i+1}=1$, we have
\begin{equation*}
\frac{q_{i-1}+q_{i+1}}{q_i}=\frac{a_{i-1}+a_{i+1}}{a_i} 
=a_{i+2} q_i -a_i q_{i+2} \in \mathbb{N},
\qquad i=1,\ldots , S_Q.
\end{equation*}
Denote 
\begin{equation*}
T_Q:=\sum\limits_{i=1}^{S_Q} \frac{q_{i-1}+q_{i+1}}{q_i} .
\end{equation*}
For $Q=3$, we have $\SF_3=\{ 1/1\}$, $S_3=1$, and $T_3=2=3S_3-1$.
Suppose that $T_{Q-1}=3S_{Q-1} -1$ for some $Q\ge 4$. The number of mediant insertions 
when enlarging $\SF_{Q-1}$ to $\SF_Q$ is precisely $S_Q -S_{Q-1}$. When such a
mediant insertion $\gamma_\mystar=a_\mystar/q_\mystar$ occurs,
say between $\gamma_i$ and $\gamma_{i+1}$, its contribution to $T_{Q-1}$ is
\begin{equation*}
A_i  =\frac{q_{i-1}+q_{i+1}}{q_i} +\frac{q_i+q_{i+2}}{q_{i+1}} ,
\end{equation*}
while its contribution to $T_Q$ is
\begin{equation*}
\begin{split}
B_i &   =\frac{q_{i-1}+q_*}{q_i} +\frac{q_i+q_{i+1}}{q_*} +\frac{q_*+q_{i+2}}{q_{i+1}}    \\
&  =  \frac{q_{i-1}+(q_i+q_{i+1})}{q_i} +1
+\frac{(q_i+q_{i+1})+q_{i+2}}{q_{i+1}} =A_i+3.
\end{split}
\end{equation*}
This shows that $T_Q =T_{Q-1} +3(S_Q-S_{Q-1})=3S_Q-1$, as desired.

\begin{figure}[t]
\centering
\hfill
\includegraphics[width=0.328\textwidth]{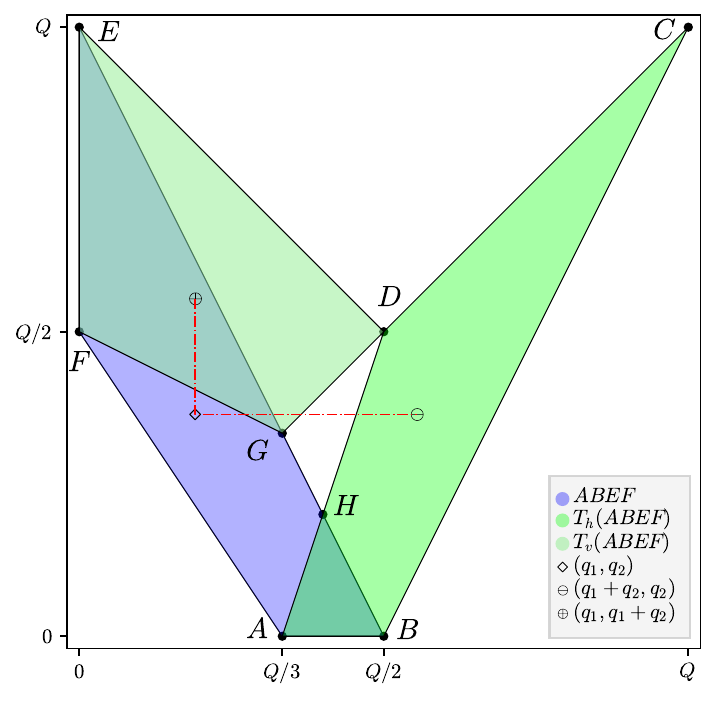} 
\hfill\mbox{}
\caption{
The horizontal and vertical transformations when insertions are made.
Thus, a point $(q_1,q_2)\in\scrD_{Q}$ that disappears at level $Q$ 
is replaced in $\scrD_{Q+1}$ with two new points $(q_1+q_2,q_2)$ and $(q_1,q_1+q_2)$. 
Therefore, through insertions, the polygon $ABEF$, in which the points 
that will disappear at level $Q$ are found, 
transforms into the polygons $ABCD$ and $GDEF$ that will contain the new points.
}
\label{FigureTransformationThTv}
\end{figure}

This also suggests that an analogue of Euler's function $\varphi (Q)$ is given here by
\begin{equation*}
\begin{split}
\Phi (Q) & := S_Q -S_{Q-1} =\# \bigg\{ \frac{a}{q} \in \QQ \cap (0,1] : q+a+\va =Q\bigg\}   \\
& =\# \bigg\{ \frac{a_1}{q_1} < \frac{a_2}{q_2} \text{ consecutive in $\SF_{Q-1}$:} \
h\bigg( \frac{a_1+a_2}{q_1+q_2} \bigg) = Q \bigg\} \\
& =\# \left\{ (a_1,q_1,a_2,q_2) \in \NN^4 : 
\begin{matrix} a_2 q_1-a_1 q_2 =1,  \  a_i < q_i, \\
 \max\big\{ h \big( \frac{a_1}{q_1}\big),
h \big( \frac{a_2}{q_2}\big) \big\} \le Q-1 , \   h\big( \frac{a_1+a_2}{q_1+q_2}\big) =Q 
\end{matrix} \right\} .
\end{split}
\end{equation*}

The arithmetic function $\Phi$ has an erratic behavior (see Figure~\ref{FigureGraphOfPhi}). It is not multiplicative and 
\begin{equation*}
\begin{split}
& \Phi (3)= S_3 =1,\quad \Phi(4)=1 ,\quad \Phi (5)=1,\quad
\Phi (6) =1  ,\quad \Phi (7)=2, \quad \Phi (8)=1, \\
& \Phi(9)=1,\quad \Phi(10)=4, \quad \Phi (11)= 1, \quad \Phi (12)=1,\quad
\Phi (13)= 4, \quad \Phi (14)=2,  \\
& \Phi(15)= 3,\quad \Phi (16) = 4, \quad \Phi(17) =1, \quad \Phi(18)= 4,
\quad \Phi(19) = 4,\quad \Phi(20) =5.
\end{split}
\end{equation*}

\begin{figure}[thb]
\centering
\hfill
 \includegraphics[width=0.48\textwidth]{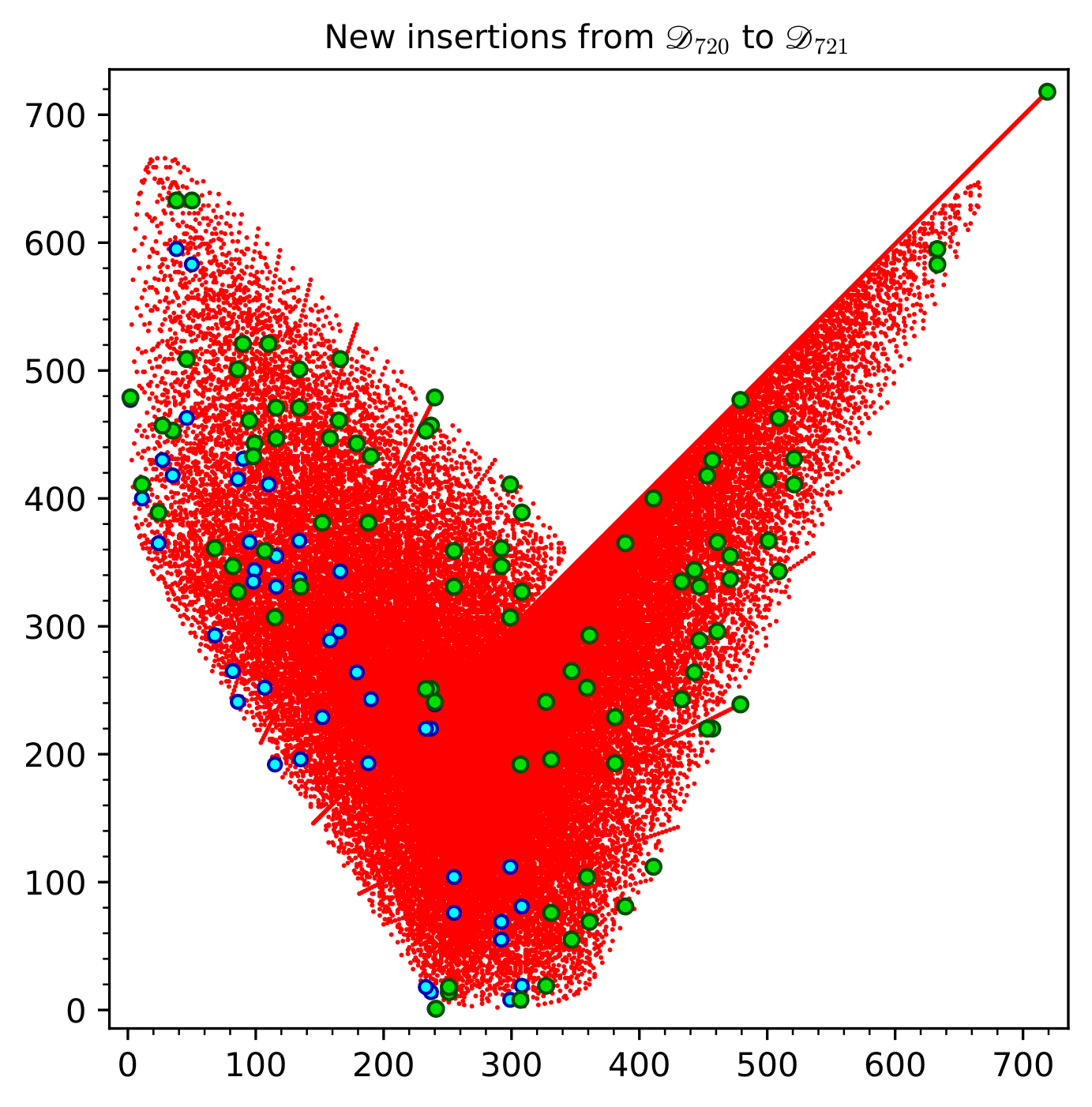}
 \includegraphics[width=0.48\textwidth]{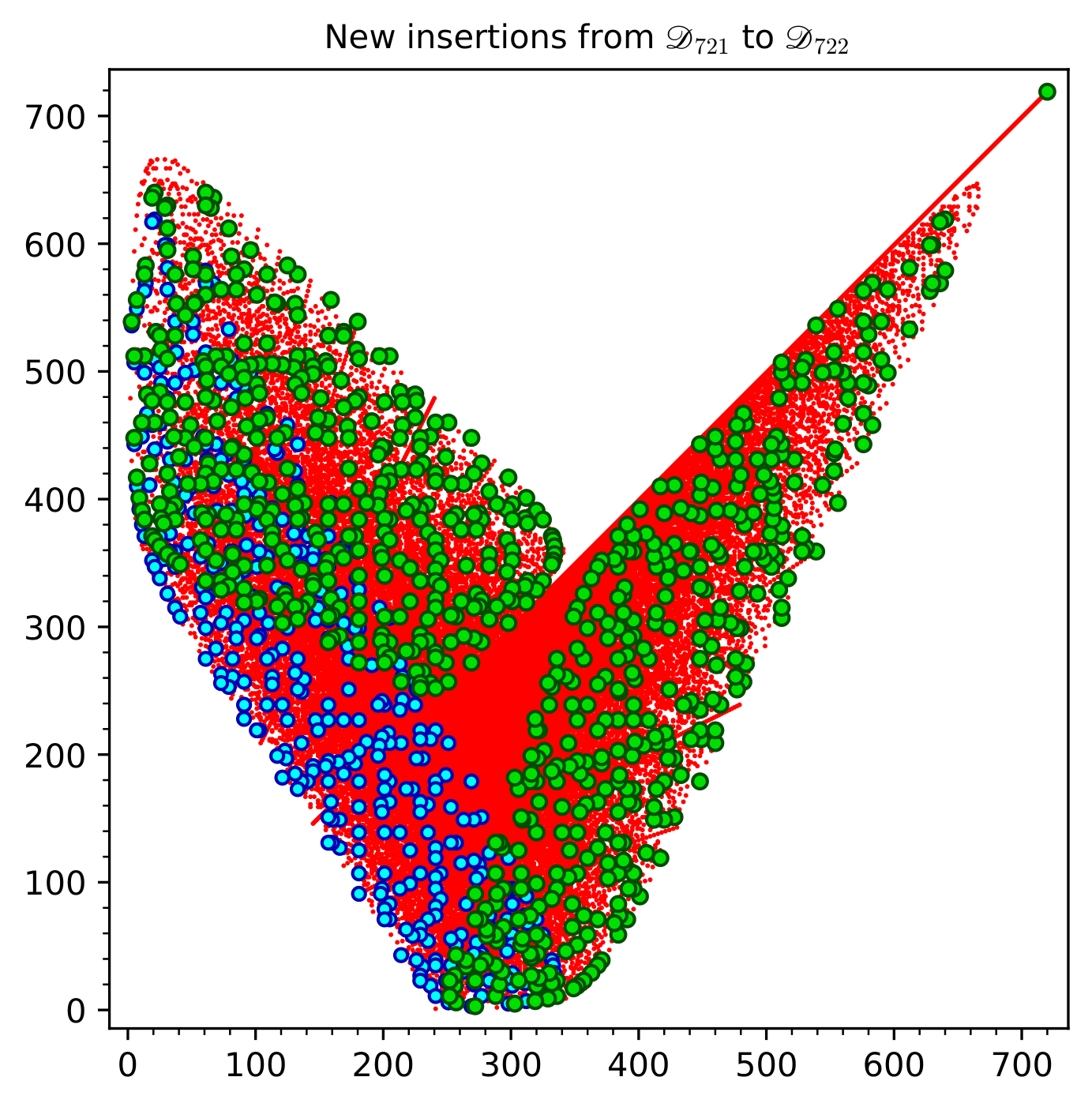}
\hfill\mbox{}\\
\hfill
\includegraphics[width=0.48\textwidth]{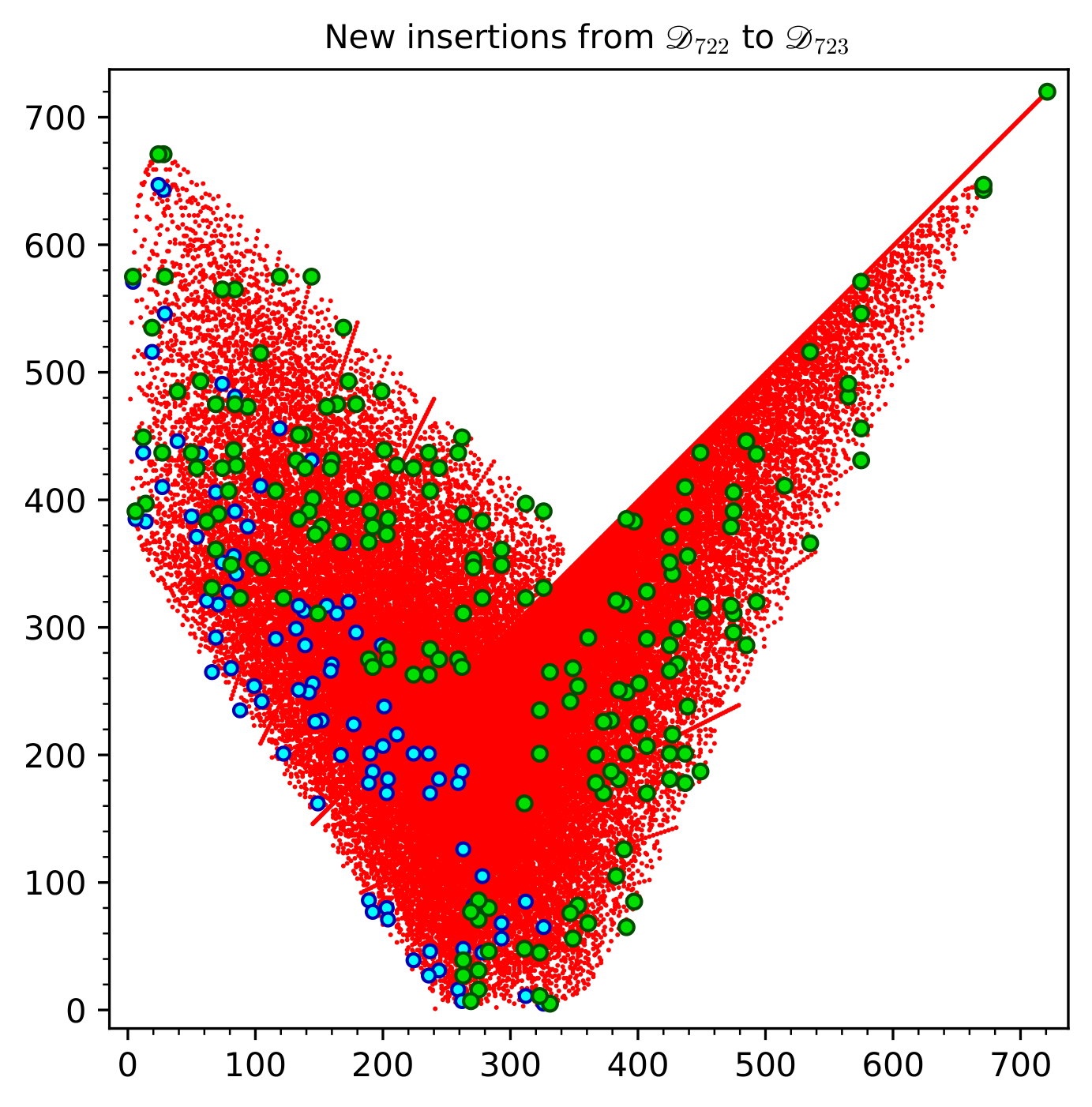}
\includegraphics[width=0.48\textwidth]{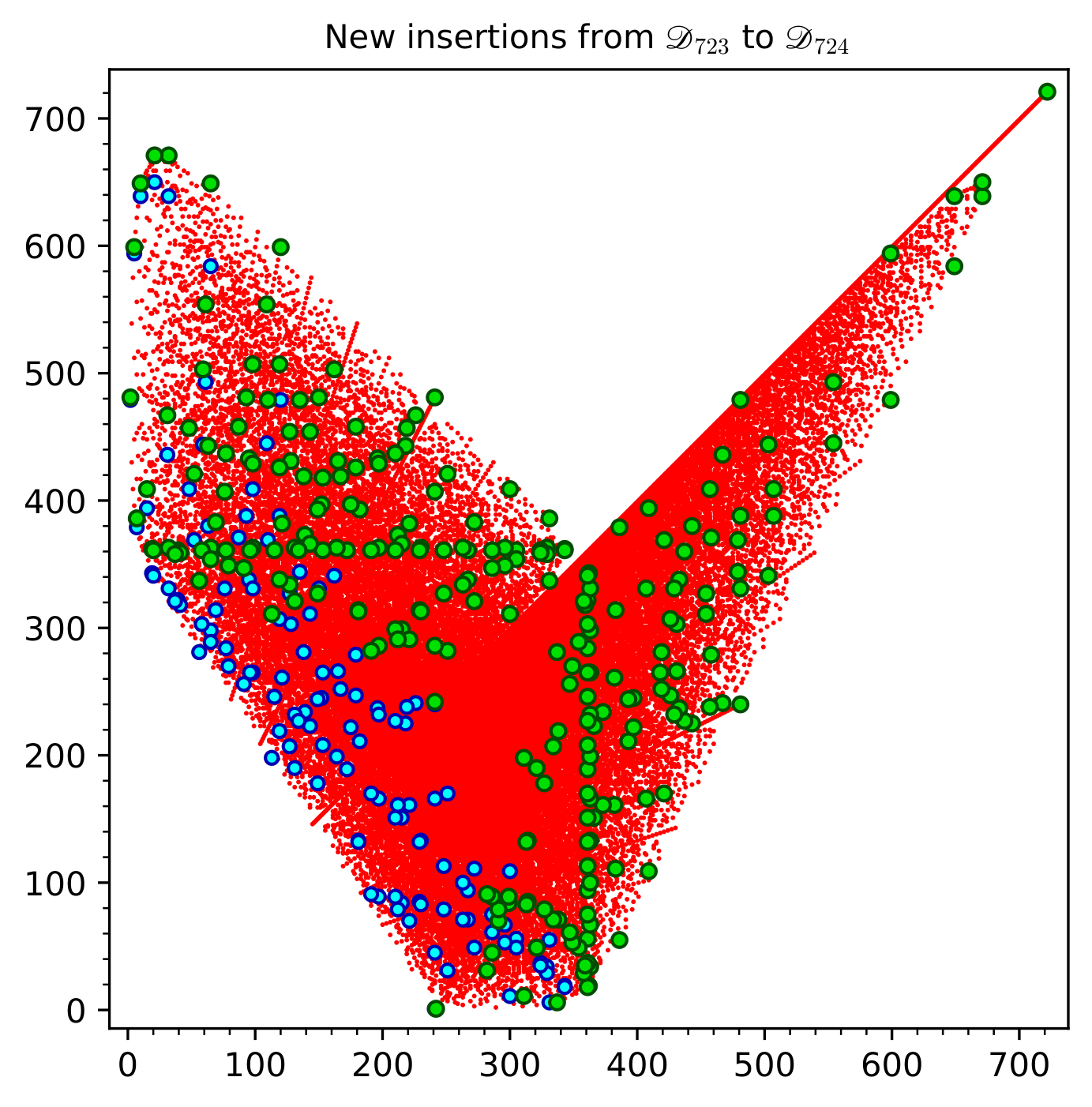}
\hfill\mbox{}
\caption{The points that disappear and the new points created by insertion when
passing from level $Q-1$ to level $Q$ for $Q=721,722,723$, and $724$.
}
\label{FigureGoneAndBuds}
\end{figure}

In Figures~\ref{FigureGraphOfPhi},~\ref{FigureTransformationThTv},  and~\ref{FigureGoneAndBuds}, it can be observed that not only does 
$\Phi(Q)$ exhibit significant oscillation, 
but also that at times, particularly around levels $Q$ 
with a large number of divisors,
the disappearing and newly created points tend to group together 
into distinct formations.



\mbox{}
\end{document}